\documentclass[a4paper,11pt,oneside,reqno]{amsart}
\usepackage{amsthm,amsmath,amssymb}
\usepackage{hyperref}
\usepackage{orcidlink}
\usepackage{graphicx}
\usepackage{booktabs}
\theoremstyle{definition}
\newtheorem{definition}{Definition}
\newtheorem{example}[definition]{Example}
\theoremstyle{remark}
\newtheorem{remark}[definition]{Remark}
\newtheorem{openproblem}[definition]{Open problem}
\theoremstyle{plain}
\newtheorem{theorem}[definition]{Theorem}
\newtheorem{lemma}[definition]{Lemma}
\newtheorem{proposition}[definition]{Proposition}
\newtheorem{corollary}[definition]{Corollary}

\newcommand{\NN}{\mathbb{N}}
\newcommand{\ZZ}{\mathbb{Z}}
\newcommand{\QQ}{\mathbb{Q}}
\newcommand{\lpf}{\operatorname{lpf}}
\newcommand{\Om}{\Omega}
\newcommand{\Gam}{\Gamma}
\newcommand{\seqnum}[1]{\href{https://oeis.org/#1}{\underline{#1}}}

\begin{document}

\title[Truncations revisited]{Truncations of the ring of number-theoretic
  functions, revisited}
\author{Jan Snellman \orcidlink{0009-0002-6676-5068}}
\address{Matematiska Institutionen, Linköpings Universitet, 581 83 Linköping, Sweden}
\email{jan.snellman@liu.se}
% Added 2026-08-30 for the arXiv/report submission.  Primary 13D02 rather than 11N25:
% the object studied is a minimal free resolution, and the sieve function is what its
% invariants turn out to equal -- the result is about the resolution, stated in the
% language of the sieve, not a contribution to sieve theory.
\subjclass[2020]{Primary 13D02; Secondary 11A25, 11N25, 13D40, 13F55}
\keywords{Ring of arithmetical functions, Dirichlet convolution, stable monomial ideal,
  minimal free resolution, Poincaré--Betti series, Legendre's sifting function, rough
  numbers, Buchstab's function}
% amsart re-prints \address/\email as a standalone block at the very end via
% \AtEndDocument; suppressed here in favour of showing it once under the title,
% matching the house style set in lattice_line_covers_preprint.tex.
\makeatletter
\let\addresses\@empty
\makeatother
\maketitle

\begin{center}
  \small
  Matematiska Institutionen, Linköpings Universitet, 581\,83 Linköping, Sweden\\
  \texttt{jan.snellman@liu.se}
\end{center}

\begin{abstract}
  Let \(K\) be a field containing \(\QQ\), let \(\Gam\) be the ring of all
  functions \(\NN^+\to K\) under Dirichlet convolution, and let \(\Gam_n\) be
  its truncation to functions supported on \([1,n]\). In \cite{Snellman2000} it was shown that \(\Gam_n\) is a
  polynomial ring modulo a monomial ideal \(I_n\) which is stable after
  reversing the order of the variables, and the Poincaré--Betti series of
  \(\Gam_n\) was computed in terms of the numbers \(C_{n,v}\) of minimal
  generators of \(I_n\) of least support \(v\).

  We prove that \(C_{n,v} = \Phi(n,p_v)\), Legendre's sifting function: the
  number of integers in \([1,n]\) free of prime factors \(\le p_v\). This
  identifies an invariant of a minimal free resolution with a classical object
  of sieve theory. As consequences we obtain: a proof of Conjecture 4.6 of
  \cite{Snellman2000}, which was left open there; the average order
  \(C_n \sim \pi(n)^2/2\) of the total number of minimal generators, showing that
  the lower bound \(C_n \ge \binom{\pi(n)+1}{2}\) of \cite{Snellman2000} is
  asymptotically sharp, together with the exact order
  \(C_n-\binom{\pi(n)+1}{2} \sim \tfrac{16}{3}n^{3/2}/\log^{3}n\) of the error;
  and the identification of \(C_n\) with the OEIS sequence \seqnum{A182843}. We also record errata for \cite{Snellman2000}: one
  stated result is false, and two proofs are incomplete. Corrected statements
  and complete proofs are given.
\end{abstract}

\section{Introduction}

Cashwell and Everett \cite{CashwellEverett} studied the ring
\begin{equation}
  \Gam = \{\,f \colon \NN^+\to K\,\}
\end{equation}
of \emph{number-theoretic functions}, with pointwise addition and the Dirichlet
convolution
\begin{equation}
  fg(n) = \sum_{ab = n}f(a)g(b);
\end{equation}
here \(K\) is a field containing \(\QQ\). Reading the fundamental theorem of
arithmetic as an isomorphism of monoids
\begin{equation}\label{eq:monoid}
  (\NN^+,\cdot)\;\cong \;\coprod_{p}(\NN,+)
\end{equation}
identifies \(\Gam\) with the large power series ring \(K[[x_1,x_2,\dots]]\) on
countably many variables, \(x_i\) corresponding to the \(i\)-th prime \(p_i\);
Cashwell and Everett proved that this ring is a unique factorisation domain.

In \cite{Snellman2000} the \emph{truncations}
\begin{equation}
  \Gam_n = \{\,f \in \Gam : f(m) = 0\text{ for all }m > n\,\}
\end{equation}
were studied. Writing \(r = \pi(n)\) and giving the monomial
\(x_1^{a_1}\cdots x_r^{a_r}\) the \emph{weight} \(\prod_i p_i^{a_i}\), so that
monomials correspond bijectively to positive integers, one has
\begin{equation}\label{eq:GammaSI}
  \Gam_n \cong S/I_n,\qquad S = K[x_1,\dots,x_r],
\end{equation}
with \(I_n\) generated by all
monomials of weight \(>n\). The ideal \(I_n\) is strongly stable with respect to
the \emph{reversed} order of the variables, because
\(w(mx_j/x_i) = w(m)p_j/p_i > w(m)\) whenever \(j \ge i\), that is
\begin{equation}\label{eq:stable}
  w\!\left(m\,x_j/x_i\right) = w(m)\,\frac{p_j}{p_i} > w(m);
\end{equation}
this single inequality
puts the Eliahou--Kervaire resolution \cite{EliahouKervaire} and the Golod
property \cite{AramovaHerzog,Peeva,Golod,GulliksenLevin} within reach, and
yields a rational Poincaré--Betti series for \(\Gam_n\) expressed in terms of
the numbers
\begin{equation}\label{eq:Cdef}
  C_{n,v} = \#\{\,m \in G(I_n) : \min(m) = v\,\},\qquad C_n = \#G(I_n),
\end{equation}
where \(G(I_n)\) is the minimal monomial generating set and \(\min(m)\) is the
least index occurring in \(m\).

The purpose of this note is threefold.

First, \emph{errata}. Theorem 3.5(3) of \cite{Snellman2000} is false, and the
proofs of Theorem 3.4(4) and Theorem 3.4(5) there are incomplete. Section
\ref{sec:errata} records this and supplies corrected statements and complete
proofs. The false statement is used nowhere else in \cite{Snellman2000}, so
nothing else in that paper is affected.

Second, and principally, a \emph{sieve-theoretic identification}. Recall
Legendre's sifting function
\begin{equation}\label{eq:phidef}
  \Phi(x,y) = \#\{\,m \le x : p \mid m \implies p > y\,\},
\end{equation}
the number of integers up to \(x\) free of prime factors \(\le y\). Our main
result, Theorem~\ref{thm:main}, is that
\begin{equation}
  \boxed{\;C_{n,v} = \Phi(n,p_v).\;}
\end{equation}
Thus the generator counts governing a minimal free resolution over \(\Gam_n\)
are literally values of the classical sifting function, and the numerical
tables of \cite{Snellman2000} are tables of \(\Phi\). Theorem 3.2 of
\cite{Snellman2000} describes \(C_{n,v}\) as the cardinality of a
\emph{different} set --- the \(p_v\)-rough numbers in the interval
\((n/p_v,n]\) --- and Theorem~\ref{thm:main} may be read as saying that this
set is equinumerous with the much more familiar one in \eqref{eq:phidef}.

Third, the \emph{consequences}. Section~\ref{sec:conjecture} proves Conjecture
4.6 of \cite{Snellman2000}, which asserted a precise shape for the reduced
Poincaré--Betti series and was left open there with numerical evidence for
\(n \le 25\). Section~\ref{sec:asymptotics} determines the average order of
\(C_n\), showing that
\[
  C_n \sim \tfrac{1}{2}\pi(n)^{2} \sim \frac{n^{2}}{2\log^{2}n},
\]
and hence that the lower bound \(C_n \ge \binom{\pi(n)+1}{2}\) of Theorem 3.4(3) of
\cite{Snellman2000} is asymptotically an equality. Section~\ref{sec:next}
collects what remains open.

\begin{remark}\label{rem:m2}
  The Golod criterion of \cite{AramovaHerzog,Peeva} requires the ideal to be
  contained in \((x_1,\dots,x_r)^{2}\), and \(I_n\) is: a degree-one
  generator would be some \(x_i\) with \(p_i > n\), and no such variable occurs
  in \(S = K[x_1,\dots,x_r]\) since \(r = \pi(n)\). Equivalently, this is Theorem
  3.5(1) of \cite{Snellman2000}. We record it because it is exactly the sort of
  hypothesis that goes unchecked.
\end{remark}

\begin{remark}
  All numerical claims below were verified by computer, along a route
  independent of the formulas being tested: the minimal generators of \(I_n\)
  were enumerated directly and their least supports tabulated, rather than
  computed from any of the counting formulas. The code and its raw output are
  available in the repository \cite{gitrepo}.
\end{remark}

\section{Notation and the counting theorem}\label{sec:prelim}

Throughout, \(p_1 = 2 < p_2 = 3 < \cdots\) are the primes, \(\pi(n)\) is the number of
primes \(\le n\), and \(\lpf(m)\) denotes the least prime factor of \(m > 1\).
We abbreviate \(r = \pi(n)\) when \(n\) is fixed, this being the number of
variables of \(S\). (\cite{Snellman2000} writes \(r(n)\) throughout for what we
write \(\pi(n)\); we have changed the notation because the results below sit
alongside the sieve-theoretic literature, where \(\pi\) is universal. Quotations
from \cite{Snellman2000} below keep its own notation.)
For a monomial \(m = x_1^{a_1}\cdots x_r^{a_r}\) we write \(\min(m)\) and
\(\max(m)\) for the least and greatest \(i\) with \(a_i > 0\), and
\(|m|=\sum_i a_i\). For an integer \(m > 1\), \(\Om(m)\) denotes the number of
prime factors of \(m\) counted with multiplicity, so that
\(\Om(p_1^{a_1}\cdots p_r^{a_r}) = a_1+\dots+a_r\), with \(\Om(1) = 0\); following
\cite{Snellman2000} we also write \(\lambda\) for \(\Om\). The companion
function counting prime factors \emph{without} multiplicity, usually written
\(\omega\), is not used in this note. Where the symbol \(\omega\) does appear
--- in Remark~\ref{rem:hypotheses} and in Section~\ref{sec:related} --- it
always denotes Buchstab's function, an unrelated object.

We shall use the following description, which is Theorem 3.2 of
\cite{Snellman2000}.

\begin{theorem}[\cite{Snellman2000}, Thm.~3.2]\label{thm:S32}
  For \(1 \le v \le \pi(n)\),
  \[
    C_{n,v} = \#\{\,x \in \NN : n/p_v < x \le n \text{ and }
    p \mid x \implies p \ge p_v \,\}.
  \]
\end{theorem}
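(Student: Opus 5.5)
We prove the statement directly from the definition of \(G(I_n)\), via the dictionary between monomials and their weights. A monomial \(m\) of weight \(N=w(m)\) lies in \(I_n\) iff \(N>n\). It is a \emph{minimal} generator iff, in addition, \(m/x_i\notin I_n\) for every \(x_i\) dividing \(m\), i.e.\ \(N/p_i\le n\) for every prime \(p_i\mid N\). Since \(N/p_i\) is largest when \(p_i\) is smallest, this reduces to the single condition \(N/\lpf(N)\le n\). So
\[
  G(I_n)\;\longleftrightarrow\;\{\,N>n : N/\lpf(N)\le n\,\},
\]
and under this bijection \(\min(m)=v\) means exactly \(\lpf(N)=p_v\).

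The next step is to reparametrise by \(x=N/p_v\). Writing \(N=p_v x\) with \(\lpf(N)=p_v\) means precisely that every prime divisor of \(x\) is \(\ge p_v\); the case \(x=1\) is allowed. The inequalities \(N>n\) and \(N/p_v\le n\) become \(n/p_v<x\le n\). Conversely, any such \(x\) gives \(N=p_vx\) with \(\lpf(N)=p_v\) and \(N/\lpf(N)=x\le n<N\). This yields a bijection between \(\{m\in G(I_n):\min(m)=v\}\) and the set in the statement, which proves the count.

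We should also check that all variables involved live in \(S\). Because \(x\le n\), every prime of \(x\) is at most \(n\), and \(p_v\le n\) holds since \(v\le\pi(n)\). Hence \(N\) is the weight of a monomial in \(x_1,\dots,x_r\) with \(r=\pi(n)\). This closes the argument; nothing from \eqref{eq:stable} is needed beyond this observation.

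The only delicate point is the reduction of minimality to the single test at \(\lpf(N)\). It rests on the fact that a monomial ideal generated by the monomials of weight \(>n\) has as minimal generators exactly those monomials all of whose proper divisors lie outside the ideal. It is enough to test the divisors \(m/x_i\), because any proper divisor divides one of these and weight is multiplicative, hence monotone under divisibility.
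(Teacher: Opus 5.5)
Your proof is correct and follows essentially the argument the paper relies on: the paper quotes this result from \cite{Snellman2000} without reproving it, but your key step --- a monomial of weight \(N\) lies in \(G(I_n)\) iff \(N>n\) and \(N/\lpf(N)\le n\), with the test at the least prime being the binding one --- is exactly the reduction used in the paper's proof that \(C_n=\)\seqnum{A182843}\((n+1)\), after which your substitution \(x=N/p_v\) is immediate. One harmless imprecision: read as a set of integers, your displayed \(\{N>n : N/\lpf(N)\le n\}\) also contains every prime \(N>n\), which is not the weight of any monomial of \(S\), so it should be restricted to weights of monomials of \(S\) (equivalently to composite \(N\), as the paper does there); since \(v\le\pi(n)\) forces \(\lpf(N)=p_v\le n\), and you check membership in \(S\) separately, your count is unaffected.
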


It will be convenient to name the counting function appearing implicitly there.
For a prime \(p\) and real \(y \ge 0\) put
\begin{equation}
  R_p(y) = \#\{\,m \le y : q \mid m \implies q \ge p\,\},
\end{equation}
the number of \(p\)-rough integers up to \(y\); the integer \(1\) is counted,
vacuously. Theorem~\ref{thm:S32} then reads
\begin{equation}\label{eq:CasR}
  C_{n,v} = R_{p_v}(n)-R_{p_v}(n/p_v).
\end{equation}
Note that
\begin{equation}\label{eq:RvsPhi}
  R_p(y) = \Phi(y,p)+\#\{\,m \le y : \lpf(m) = p\,\},
\end{equation}
so \(R_p\) and \(\Phi\) differ precisely by the multiples of \(p\) that are
\(p\)-rough.

\section{The main theorem}\label{sec:main}

\begin{theorem}\label{thm:main}
  For all \(n \ge 2\) and \(1 \le v \le \pi(n)\),
  \[
    C_{n,v} = \Phi(n,p_v),
  \]
  the number of integers in \([1,n]\) having no prime factor \(\le p_v\).
\end{theorem}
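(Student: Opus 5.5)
We prove the equivalent identity \(R_{p_v}(n)-R_{p_v}(n/p_v) = \Phi(n,p_v)\). By Theorem~\ref{thm:S32}, in the form \eqref{eq:CasR}, this is exactly the statement of Theorem~\ref{thm:main}. Write \(p=p_v\).

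By \eqref{eq:RvsPhi},
\[
  R_p(n) = \Phi(n,p) + \#\{\,m\le n : \lpf(m)=p\,\}.
\]
So it suffices to show
\[
  \#\{\,m\le n : \lpf(m)=p\,\} = R_p(n/p).
\]
This is a direct bijection. An integer \(m\) with \(m\le n\) and \(\lpf(m)=p\) is uniquely \(m=pk\), where \(k\le n/p\) and every prime factor of \(k\) is \(\ge p\). The case \(k=1\), i.e.\ \(m=p\), is allowed, since \(R_p\) counts \(1\) vacuously. Conversely, for any \(p\)-rough \(k\le n/p\), the number \(pk\) is \(\le n\) and has least prime factor exactly \(p\). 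Hence \(k\mapsto pk\) is a bijection between the set counted by \(R_p(n/p)\) and \(\{\,m\le n:\lpf(m)=p\,\}\).

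Substituting gives \(R_p(n)-R_p(n/p)=\Phi(n,p)\), which is the claim.

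The argument is a one-line sieve identity, so the main obstacle is not mathematical depth but bookkeeping at the boundaries:
\begin{itemize}
  \item \(1\) must be counted consistently in both \(R_p\) and \(\Phi\); it is counted in both, since it vacuously has no prime factor \(\le p\).
  \item The inequalities must be read correctly: strict \(>\) in \(\Phi\) versus \(\ge\) in \(R_p\), and \(x>n/p_v\) in Theorem~\ref{thm:S32}. These match \eqref{eq:CasR} because \(R_p(y)\) counts integers \(\le y\).
  \item Non-integer \(n/p\) is harmless, since only integers \(k\le n/p\) are counted.
\end{itemize}
One should also verify \eqref{eq:RvsPhi} itself. A \(p\)-rough integer either has no prime factor \(\le p\), or, being rough, its least prime factor equals \(p\); these two cases are disjoint.
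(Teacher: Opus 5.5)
Your proof is correct and is essentially the paper's argument. The paper expands \(R_{p}(y)=\sum_{a\ge 0}\Phi(y/p^{a},p)\) by the exact power of \(p\) dividing a \(p\)-rough integer and lets the two sums telescope; you instead peel off a single factor of \(p\), via \eqref{eq:RvsPhi} and the bijection \(k\mapsto pk\), to get the recursion \(R_p(n)=\Phi(n,p)+R_p(n/p)\) directly, which is exactly what the paper's telescoping collapses to.
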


\begin{proof}
  Write \(q = p_v\). Every \(q\)-rough integer \(m\) factors uniquely as
  \(m = q^{a}m'\) with \(a \ge 0\) and \(m'\) free of prime factors \(\le q\);
  conversely every such pair \((a,m')\) gives a \(q\)-rough integer. Counting
  those with \(m \le y\) according to \(a\) gives
  \begin{equation}\label{eq:Rsum}
    R_P(y) = \sum_{a \ge 0}\Phi\!\left(y/q^{a},\,q\right),
  \end{equation}
  a finite sum, since the terms vanish once \(q^{a} > y\). Substituting
  \eqref{eq:Rsum} into \eqref{eq:CasR}, the two sums telescope:
  \begin{align*}
    C_{n,v}&= R_P(n)-R_P(n/q)\\
           &= \sum_{a \ge 0}\Phi\!\left(\frac{n}{q^{a}},q\right)
            -\sum_{a \ge 0}\Phi\!\left(\frac{n}{q^{a+1}},q\right)\\
           &= \Phi(n,q).\qedhere
  \end{align*}
\end{proof}

\begin{example}
  Take \(n = 25\), so \(r = 9\). The integers in \([1,25]\) with no prime factor
  \(\le 2\) are the thirteen odd numbers, so \(C_{25,1} = 13\); those with no
  prime factor \(\le 3\) are \(1,5,7,11,13,17,19,23,25\), so \(C_{25,2} = 9\);
  those with no prime factor \(\le 5\) are \(1,7,11,13,17,19,23\), so
  \(C_{25,3} = 7\). These agree with Figure 1 of \cite{Snellman2000}. By contrast
  the set of Theorem~\ref{thm:S32} for \(v = 2\) is
  \(\{9,11,13,15,17,19,21,23,25\}\) --- a different set of the same size.
\end{example}

Splitting \([1,n]\) into \(1\), the primes and the composites gives at once the
following reformulation, which is the form we shall use.

\begin{corollary}\label{cor:split}
  For \(1 \le v \le \pi(n)\),
  \begin{equation}\label{eq:split}
    C_{n,v} = \bigl(\pi(n)-v+1\bigr)+E(n,v),
  \end{equation}
  where
  \begin{equation}\label{eq:Edef}
    E(n,v) = \#\{\,x \le n :
      x \text{ composite},\ p \mid x \implies p > p_v\,\}.
  \end{equation}
  In particular
  \[
    C_{n,v}\ \ge \ \pi(n)-v+1,
  \]
  with equality if and only if \(p_{v+1}^{2} > n\).
\end{corollary}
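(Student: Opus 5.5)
The plan is to start from Theorem~\ref{thm:main}, which gives \(C_{n,v} = \Phi(n,p_v)\), and partition the set counted by \(\Phi(n,p_v)\). The integers \(x \in [1,n]\) with no prime factor \(\le p_v\) split into three disjoint classes: the integer \(1\), which is counted vacuously; the primes \(p\) with \(p_v < p \le n\); and the composites all of whose prime factors exceed \(p_v\). The first class contributes \(1\). The second consists exactly of \(p_{v+1},\dots,p_{\pi(n)}\), so it contributes \(\pi(n)-v\). The third class is by definition the set counted by \(E(n,v)\) in \eqref{eq:Edef}. Adding the three contributions gives \eqref{eq:split}, and since \(E(n,v)\ge 0\) the inequality \(C_{n,v}\ge \pi(n)-v+1\) follows at once.

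For the equality case I will show that \(E(n,v)=0\) if and only if \(p_{v+1}^2>n\).

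\emph{If \(p_{v+1}^2 \le n\).} Then \(x = p_{v+1}^2\) is a composite in \([1,n]\) whose only prime factor exceeds \(p_v\). So \(E(n,v)\ge 1\).

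\emph{If \(p_{v+1}^2 > n\).} Any composite \(x\) with all prime factors \(> p_v\) has all its prime factors \(\ge p_{v+1}\) and has at least two of them counted with multiplicity. Hence \(x \ge p_{v+1}^2 > n\), so no such \(x\) lies in \([1,n]\), and \(E(n,v)=0\).

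There is one edge case to check: \(v = \pi(n)\), where \(p_{v+1}\) is the first prime exceeding \(n\). Then \(p_{v+1}^2 > n\) holds trivially. Both sides of the equivalence are consistent here, since \(E(n,\pi(n))=0\), and the formula gives \(C_{n,\pi(n)}=1\).

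No step is a real obstacle. The only care needed is bookkeeping: \(1\) must be counted in \(\Phi\), and the prime count must start at \(p_{v+1}\) rather than \(p_v\), which is what produces the \(+1\) in \(\pi(n)-v+1\).
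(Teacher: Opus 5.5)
Your proposal is correct and follows the paper's own argument. Like the paper, you apply Theorem~\ref{thm:main}, split the integers counted by \(\Phi(n,p_v)\) into \(1\), the primes in \((p_v,n]\) and the composites counted by \(E(n,v)\), and use that the least composite with all prime factors exceeding \(p_v\) is \(p_{v+1}^{2}\); you just spell out both directions of the equality criterion and the case \(v=\pi(n)\), which the paper compresses into one sentence.
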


\begin{proof}
  The integers counted by \(\Phi(n,p_v)\) are \(1\); the primes \(q\) with
  \(p_v < q \le n\), of which there are \(\pi(n)-v\); and the composites
  counted by \(E(n,v)\). For the last sentence, \(E(n,v) = 0\) exactly when there
  is no composite \(\le n\) all of whose prime factors exceed \(p_v\), and the
  least such composite is \(p_{v+1}^{2}\).
\end{proof}

Corollary~\ref{cor:split} contains Theorem 3.4(2) of \cite{Snellman2000} (put
\(v = 1+\pi(n)-u\)) and sharpens it to an exact criterion for equality. Summing over
\(v\) gives a closed form for the total.

\begin{corollary}\label{cor:total}
  \(\displaystyle
    C_n = \binom{\pi(n)+1}{2}+\sum_{\substack{x \le n\\ x\text{ composite}}}
        \bigl(\pi(\lpf x)-1\bigr)\).
\end{corollary}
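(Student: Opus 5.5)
We obtain Corollary~\ref{cor:total} by summing \eqref{eq:split} of Corollary~\ref{cor:split} over \(v = 1,\dots,\pi(n)\), since \(C_n = \sum_{v=1}^{\pi(n)} C_{n,v}\). Every minimal generator of \(I_n\) is a nonconstant monomial, so it has a well-defined least support \(\min(m)\in\{1,\dots,\pi(n)\}\). The proof then has two parts: evaluate the linear term, and rearrange the double count hidden in \(\sum_v E(n,v)\).

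The linear part is immediate. Putting \(r=\pi(n)\), we have \(\sum_{v=1}^{r}(r-v+1) = r + (r-1) + \dots + 1 = \binom{r+1}{2}\).

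For the remaining part, swap the order of summation in
\[
  \sum_{v=1}^{r} E(n,v) = \sum_{\substack{x \le n\\ x\text{ composite}}} \#\{\,v : 1 \le v \le r,\ p \mid x \implies p > p_v\,\}.
\]
Fix a composite \(x \le n\). The condition that every prime factor of \(x\) exceeds \(p_v\) is equivalent to \(\lpf(x) > p_v\), that is, \(v < \pi(\lpf x)\). Since \(x\) is composite and \(x \le n\), we have \(\lpf(x) \le \sqrt n \le n\), so \(\pi(\lpf x) \le r\). Hence the admissible \(v\) are exactly \(1,\dots,\pi(\lpf x)-1\), which is \(\pi(\lpf x)-1\) values, and this matches the summand in the claimed formula.

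The only point that needs care is the bookkeeping in the swap: the strict inequality \(p > p_v\) in \eqref{eq:Edef} must become \(v < \pi(\lpf x)\) and not \(v \le \pi(\lpf x)\), and we must check that the upper limit \(v \le r\) is never the binding constraint. Both are settled by the observation \(\lpf(x)\le\sqrt n\) for composite \(x\). Adding the two parts gives the stated identity.
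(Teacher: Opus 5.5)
Your proposal is correct and follows the paper's proof essentially verbatim: you sum Corollary~\ref{cor:split} over \(v\), evaluate \(\sum_{v}(r-v+1)=\binom{r+1}{2}\), and exchange the order of summation in \(\sum_v E(n,v)\), so that each composite \(x\) is counted exactly for the \(\pi(\lpf x)-1\) indices \(v\) with \(p_v<\lpf x\). Your extra check that the constraint \(v\le r\) never binds is a detail the paper leaves implicit; it already follows from \(\lpf x\le x\le n\), so the appeal to \(\sqrt n\) is not needed.
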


\begin{proof}
  Sum Corollary~\ref{cor:split} over \(v = 1,\dots,r\). The first terms contribute
  \[
    \sum_{v = 1}^{r}(r-v+1) = \sum_{j = 1}^{r}j = \binom{r+1}{2}.
  \]
  For the second, exchange the order of summation: a composite \(x \le n\) is counted by \(E(n,v)\) for exactly those
  \(v \ge 1\) with \(p_v < \lpf x\), and there are \(\pi(\lpf x)-1\) of those.
\end{proof}

\begin{remark}
  Corollary~\ref{cor:total} proves the increment formula
  \[
    C_n-C_{n-1}=
    \begin{cases}
      \pi(n) & \text{if \(n\) is prime},\\[2pt]
      \pi(\lpf n)-1 & \text{if \(n\) is composite},
    \end{cases}
  \]
  recorded empirically by Costello for the OEIS sequence \seqnum{A182843}; see
  Section~\ref{sec:asymptotics}. The two cases have different sources: when
  \(n\) is prime the whole increment comes from \(\binom{\pi(n)+1}{2}\), which
  grows by \(\pi(n)\), and the sum over composites is unchanged; when \(n\) is
  composite the binomial is unchanged and the sum gains the single term
  \(\pi(\lpf n)-1\). They can be written as one expression,
  \([\,n\text{ prime}\,]+\bigl(\pi(\lpf n)-1\bigr)\) with an Iverson bracket,
  because \(\lpf n = n\) for \(n\) prime.
\end{remark}

\section{Errata for \texorpdfstring{\cite{Snellman2000}}{[Sne00]}}\label{sec:errata}

\subsection{A false statement}

Theorem 3.5(3) of \cite{Snellman2000} asserts
\begin{equation}\label{eq:false}
  \binom{\pi(n)}{2} = \#\{\,m \in \NN^+ : m \le n,\ \lambda(m) = 2\,\},
\end{equation}
and the proof there reads, in full, ``The first and the last assertions are
obvious.'' Statement~\eqref{eq:false} is false. Its left-hand side counts
\emph{pairs of distinct primes \(\le n\)}, its right-hand side counts
\emph{products of two primes that are \(\le n\)}; the two agree only for
\(n = 2\) and \(n = 4\). At \(n = 10\) the left side is \(\binom{4}{2} = 6\) while the
right side counts \(\{4,6,9,10\}\), so is \(4\). Asymptotically the left side is
\(\sim n^{2}/(2\log^{2}n)\) and the right side \(\sim n\log\log n/\log n\).

The intended statement was presumably the following, which is what the proof of
Theorem 3.4(2)--(3) of \cite{Snellman2000} actually establishes.

\begin{proposition}\label{prop:fixed35}
  For every \(n \ge 2\),
  \[
    \#\{\,m \in G(I_n) : |\!\operatorname{supp}(m)|=2\,\}
      \ \ge \ \binom{\pi(n)}{2},
  \]
  with equality exactly for \(2 \le n \le 8\).
\end{proposition}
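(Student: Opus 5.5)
The plan is to prove the inequality with an explicit injection from pairs of primes into support-two generators, and then to read off the equality range by deciding when support-two generators other than the injected ones exist. Throughout, a monomial \(m\) lies in \(G(I_n)\) exactly when \(w(m)>n\) and \(w(m/x_k)\le n\) for every \(k\) in \(\operatorname{supp}(m)\).

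\emph{Injection.} Fix \(1\le i<j\le \pi(n)\) and let \(a\ge 1\) be least with \(p_i^{a}p_j>n\); here \(a\ge 1\) because \(p_j\le n\). Put \(m_{ij}=x_i^{a}x_j\). We check that \(m_{ij}\in G(I_n)\).
\begin{itemize}
\item \(w(m_{ij})>n\) by the choice of \(a\).
\item By minimality of \(a\), \(w(m_{ij}/x_i)=p_i^{a-1}p_j\le n\).
\item Since \(p_i<p_j\), \(w(m_{ij}/x_j)=p_i^{a}\le p_i^{a-1}p_j\le n\). This is the inequality \eqref{eq:stable} in miniature.
\end{itemize}
So \(m_{ij}\) is a minimal generator with support \(\{i,j\}\). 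Distinct pairs give distinct supports, so the map is injective, and this yields \(\ge\binom{\pi(n)}{2}\).

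\emph{Equality for \(n\le 8\).} Let \(m=x_i^{a}x_j^{b}\in G(I_n)\) with \(i<j\). If \(b\ge 2\), then \(w(m/x_i)\ge p_j^{2}\ge 9>n\), which contradicts minimality. Hence \(b=1\). Then \(a\) is forced to be the least exponent with \(p_i^{a}p_j>n\), because \(w(m/x_i)=p_i^{a-1}p_j\le n\). So \(m=m_{ij}\), and the injection is a bijection. This also covers \(n=2,3\), where both sides are \(0\) or \(1\) trivially.

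\emph{Strict inequality for \(n\ge 9\).} We produce a support-two generator not of the form \(m_{ij}\). Since \(9\le n\), there is a least \(a\ge 1\) with \(2^{a}\cdot 9>n\); put \(m=x_1^{a}x_2^{2}\).
\begin{itemize}
\item \(w(m/x_1)=2^{a-1}\cdot 9\le n\) by minimality of \(a\).
\item \(w(m/x_2)=2^{a}\cdot 3=\tfrac13\,2^{a}\cdot 9\le \tfrac23 n\le n\).
\end{itemize}
So \(m\in G(I_n)\) has support \(\{1,2\}\) and exponent \(2\) on \(x_2\), hence \(m\ne m_{12}\). This gives strict inequality.

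The only step needing care is the characterisation in the equality case, namely that a support-two generator must have exponent one on its larger variable. As shown above, this follows immediately from \(p_j^{2}\ge 9\).
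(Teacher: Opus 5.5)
Your proof is correct. Its skeleton is the paper's: one support-two generator per pair \(i<j\), then an extra generator on \(\{1,2\}\) once \(n \ge 9\), checked by the same estimate \(2^{a}\cdot 3 \le \tfrac{2}{3}n\). It differs from the paper in two places.

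For the inequality, the paper does not name the generator. It notes that \(x_i^{b-1}x_j \in I_n\) where \(p_i^{b-1} \le n < p_i^{b}\), takes some minimal generator dividing it, and rules out the supports \(\{i\}\) and \(\{j\}\). You instead write down \(m_{ij} = x_i^{a}x_j\) and check minimality directly; it is in fact the same generator, so this is only a presentational gain.

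The more substantive difference is the equality case. The paper disposes of \(2 \le n \le 8\) as ``a finite check'', whereas you prove it. A support-two generator must have exponent \(1\) on its larger variable, because \(p_j^{2} \ge 9 > n\), and it is then forced to be \(m_{ij}\). The second half of that argument holds for every \(n\). So you have in effect shown that the excess over \(\binom{\pi(n)}{2}\) is exactly the number of support-two generators with exponent at least \(2\) on the larger variable. This makes \(9 = p_2^{2}\) visibly the threshold in both directions.

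In the other direction, the paper counts all \(b^{*}\) generators on \(\{1,2\}\), which gives the exact excess on that support. You exhibit only the \(b = 2\) member \(x_1^{a}x_2^{2}\), which is all that strictness requires.
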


\begin{proof}
  Let \(1 \le i < j \le r\). Choose \(b \ge 1\) with \(p_i^{\,b-1} \le n < p_i^{\,b}\).
  Then
  \[
    w(x_i^{\,b-1}x_j) = p_i^{\,b-1}p_j > p_i^{\,b-1}p_i = p_i^{\,b} > n,
  \]
  so \(x_i^{\,b-1}x_j \in I_n\) and is therefore divisible by some
  \(m \in G(I_n)\), necessarily with \(\operatorname{supp}(m) \subseteq \{i,j\}\).
  Now \(x_i^{\,b-1} \notin I_n\) because \(p_i^{\,b-1} \le n\), so \(m\) is not a
  power of \(x_i\); and \(x_j \notin I_n\) because \(p_j \le n\), so \(m\) is not
  a power of \(x_j\). Hence \(\operatorname{supp}(m) = \{i,j\}\). Distinct pairs
  give distinct generators, whence the inequality.

  For the equality range, count the generators supported on \(\{1,2\}\)
  exactly. Let \(b^{*} = \max\{\,b \ge 1 : 3^{b} \le n\,\}\), and for
  \(1 \le b \le b^{*}\) put \(a(b) = \min\{\,a \ge 1 : 2^{a}3^{b} > n\,\}\). Then
  \(x_1^{a(b)}x_2^{b}\) is a minimal generator: its weight exceeds \(n\) by
  definition; \(2^{a(b)-1}3^{b} \le n\) by minimality of \(a(b)\) when
  \(a(b) \ge 2\), and because \(3^{b} \le n\) when \(a(b) = 1\); and
  \[
    2^{a(b)}3^{\,b-1} = \tfrac{1}{3} \cdot 2^{a(b)}3^{b}
      \le \tfrac{1}{3} \cdot 2 \cdot 2^{a(b)-1}3^{b} \le \tfrac{2}{3}n \le n .
  \]
  Conversely every generator supported on \(\{1,2\}\) arises this way, since
  \(2^{a-1}3^{b} \le n\) forces \(b \le b^{*}\), and \(a\) is then determined.
  So there are exactly \(b^{*}\) of them.

  Hence the inequality is strict as soon as \(b^{*} \ge 2\), that is, as soon as
  \(n \ge 9\); and for \(2 \le n \le 8\) equality is a finite check.
\end{proof}

\subsection{Two incomplete proofs}

\emph{Theorem 3.4(4) of \cite{Snellman2000}} asserts that for each \(v\) one has
\(C_{n,1+\pi(n)-v} = v\) for all sufficiently large \(n\). The proof given there
reads ``the only integers \(x \le n\) with all prime factors \(\ge 1+r(n)-v\) are
\(p_{1+r(n)-v},\dots,p_{r(n)}\)\dots{} and they are all \(>n/p_v\)''. Two
subscripts are wrong: the condition on prime factors should read
\(\ge p_{1+r(n)-v}\) (a prime, not an index), and the final bound should read
\(>n/p_{1+r(n)-v}\). With Corollary~\ref{cor:split} the statement becomes
immediate, and effective.

\begin{proposition}\label{prop:fixed344}
  Fix \(v \ge 1\). Then \(C_{n,1+\pi(n)-v} = v\) for every \(n > 4^{\,v-1}\).
\end{proposition}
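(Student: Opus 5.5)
By Corollary~\ref{cor:split}, applied with index \(u = 1+\pi(n)-v\), we have
\[
  C_{n,u} = \bigl(\pi(n)-u+1\bigr)+E(n,u) = v+E(n,u),
\]
with \(E(n,u)=0\) if and only if \(p_{u+1}^2>n\). So the plan is to check that \(u\) is a legitimate index, i.e.\ \(1\le u\le\pi(n)\), and then that \(p_{\pi(n)-v+2}^2>n\) whenever \(n>4^{v-1}\).

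For \(v=1\) we have \(u=\pi(n)\ge 1\) because \(n\ge 2\), and \(p_{\pi(n)+1}>n\), so \(p_{u+1}^2>n\) trivially. For \(v\ge 2\), the prime \(p_{\pi(n)-v+2}\) is the \((v-1)\)-th largest prime \(\le n\). The condition \(p_{\pi(n)-v+2}^2>n\) therefore says exactly that the interval \((\sqrt n,n]\) contains at least \(v-1\) primes.

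The key step is to produce these primes with Bertrand's postulate, in the form: for every real \(x\ge1\) there is a prime \(p\) with \(x<p\le 2x\). This follows from the integer version applied to \(m=\lfloor x\rfloor\): a prime with \(m<p<2m\) satisfies \(p\ge m+1>x\) and \(p<2m\le 2x\). Consider the \(v-1\) pairwise disjoint intervals
\[
  \bigl(n/2^{k},\,n/2^{k-1}\bigr],\qquad k=1,\dots,v-1 .
\]
Each contains a prime provided its left endpoint is at least \(1\). The hypothesis \(n>4^{v-1}\) gives \(2^{v-1}<\sqrt n\), hence
\[
  n/2^{k}\ \ge\ n/2^{v-1}\ >\ \sqrt n\ \ge\ 1 \qquad (1\le k\le v-1).
\]
So each interval lies inside \((\sqrt n,n]\) and contains a prime, giving at least \(v-1\) distinct primes in \((\sqrt n,n]\).

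Finally, \(u\ge1\) must hold, i.e.\ \(\pi(n)\ge v\). This follows because, in addition to the \(v-1\) primes above, the prime \(2\) satisfies \(2\le\sqrt n\), since \(n>4^{v-1}\ge4\) when \(v\ge2\). Hence \(E(n,u)=0\) and \(C_{n,u}=v\).

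The only delicate point, and the main obstacle, is the bookkeeping: confirming that "\(p_{u+1}^2>n\)" translates precisely into "at least \(v-1\) primes in \((\sqrt n,n]\)", and that the endpoint \(n/2^{v-1}\) clears \(\sqrt n\) strictly under the stated bound \(n>4^{v-1}\).
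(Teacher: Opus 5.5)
Your proof is correct and is essentially the paper's argument. Corollary~\ref{cor:split} reduces the claim to \(p_{\pi(n)-v+2}^{2}>n\), and repeated Bertrand on the dyadic intervals \((n/2^{k},n/2^{k-1}]\) gives \(p_{\pi(n)-v+2}>n/2^{v-1}>\sqrt n\), which is the paper's inequality \(p_{\pi(n)-j+1}>n/2^{j}\) at \(j=v-1\). You also check explicitly that the index \(u=1+\pi(n)-v\) is at least \(1\), which the paper leaves implicit. The only blemish is that the strict form \(m<p<2m\) of Bertrand fails at \(m=1\), but in your application \(n/2^{k}>\sqrt n>2\), so that case never arises.
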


\begin{proof}
  Put \(u = 1+\pi(n)-v\). By Corollary~\ref{cor:split}, \(C_{n,u} = v\) if and only
  if \(p_{u+1}^{2} > n\), where \(p_{u+1} = p_{\pi(n)-v+2}\). By Bertrand's postulate
  there is a prime in \((y,2y)\) for every \(y > 1\); applied repeatedly this
  gives \(p_{\pi(n)-j+1} > n/2^{\,j}\) for \(j \ge 1\), while for \(j = 0\) the same
  inequality reads \(p_{\pi(n)+1} > n\), true by definition of \(\pi\). So
  \(p_{\pi(n)-v+2} > n/2^{\,v-1}\) and hence \(p_{u+1}^{2} > n^{2}/4^{\,v-1}\), which
  exceeds \(n\) as soon as \(n > 4^{\,v-1}\).
\end{proof}

\emph{Theorem 3.4(5) of \cite{Snellman2000}} asserts that \(C_{n,v} = C_{n-1,v}\)
for all \(v\) when \(n\) is even. The proof there establishes only one of the two
inclusions needed. It shows that an \(x\) counted for \(n\) is counted for
\(n-1\); the converse, that no \(x\) is counted for \(n-1\) but not for \(n\),
is not addressed. The statement is nonetheless true, and with Theorem
\ref{thm:main} the whole matter is a triviality.

\begin{proposition}\label{prop:fixed345}
  If \(n \ge 4\) is even then \(C_{n,v} = C_{n-1,v}\) for \(1 \le v \le \pi(n)\); for
  \(v > \pi(n)\) both sides vanish.
\end{proposition}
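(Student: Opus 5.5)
The plan is to apply Theorem~\ref{thm:main} directly and compare the sifting function at \(n\) and at \(n-1\). For \(1 \le v \le \pi(n)\) the theorem gives \(C_{n,v} = \Phi(n,p_v)\). The first step is to check that \(\pi(n-1) = \pi(n)\) when \(n\ge 4\) is even, because \(n\) is then composite. Since \(n-1 \ge 3 \ge 2\) and \(v \le \pi(n-1)\), Theorem~\ref{thm:main} also applies at \(n-1\) and gives \(C_{n-1,v} = \Phi(n-1,p_v)\).

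The second step is the comparison itself. By \eqref{eq:phidef},
\[
  \Phi(n,p_v)-\Phi(n-1,p_v) = \bigl[\,p \mid n \implies p > p_v\,\bigr],
\]
that is, the difference is \(1\) if \(n\) itself is free of prime factors \(\le p_v\) and \(0\) otherwise. Here \(n\) is even, so \(2 = p_1 \le p_v\) divides \(n\). Hence \(n\) is not counted, the difference vanishes, and \(C_{n,v} = C_{n-1,v}\).

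For the final clause we use the convention implicit in \eqref{eq:Cdef}. Every minimal generator of \(I_n\) lies in \(S = K[x_1,\dots,x_r]\) with \(r=\pi(n)\), so it has \(\min(m) \le \pi(n)\). Consequently \(C_{n,v} = 0\) for \(v > \pi(n)\). The same argument at \(n-1\) gives \(C_{n-1,v}=0\) for \(v>\pi(n-1)=\pi(n)\).

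There is no real obstacle. The only point needing care is the range of \(v\): we must check that \(\pi(n-1)=\pi(n)\), so that Theorem~\ref{thm:main} may legitimately be applied at \(n-1\) for the full range \(1\le v\le\pi(n)\). This is also why the hypothesis \(n\ge4\) is needed. At \(n=2\) we would have \(\pi(1)=0\), and indeed \(C_{2,1}=1\ne 0=C_{1,1}\).
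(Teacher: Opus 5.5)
Your proposal is correct and is essentially the paper's proof: apply Theorem~\ref{thm:main} at \(n\) and at \(n-1\), and note that the difference \(\Phi(n,p_v)-\Phi(n-1,p_v)\) vanishes because the even number \(n\) is never counted. Your check that \(\pi(n-1)=\pi(n)\) for even \(n\ge 4\), so that Theorem~\ref{thm:main} applies at \(n-1\) on the whole range \(1\le v\le\pi(n)\), is a detail the paper leaves implicit, and your separate treatment of the clause \(v>\pi(n)\) is also correct.
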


\begin{proof}
  By Theorem~\ref{thm:main}, \(C_{n,v} = \Phi(n,p_v)\) and
  \(C_{n-1,v} = \Phi(n-1,p_v)\). These differ by \(1\) or \(0\) according as
  \(n\) is or is not counted by \(\Phi(\cdot,p_v)\); and \(n\) is even, so it
  is divisible by \(p_1 = 2 \le p_v\) and is never counted.
\end{proof}

\begin{remark}
  For completeness we note that the argument omitted in \cite{Snellman2000} can
  also be supplied directly, \emph{for \(v > 1\)}: an integer lying in
  \(((n-1)/p_v,\,n/p_v]\) is unique if it exists, since that interval has length
  \(1/p_v < 1\), and it must satisfy \(p_vx = n\); as \(n\) is even and \(p_v\) odd
  for \(v > 1\), such an \(x\) is even and so is excluded by the condition on
  prime factors. At \(v = 1\) this fails: the roughness condition is then vacuous,
  and the two sets of Theorem~\ref{thm:S32} genuinely differ, neither containing
  the other, their cardinalities agreeing only after a compensating gain at the
  top end. That case is \cite{Snellman2000}'s own item (6),
  \(C_{n,1} = \lceil n/2\rceil\), and needs no repair.
\end{remark}

\subsection{Smaller items}

For the record: in equation (15) of \cite{Snellman2000} the range should be
\(1 \le j \le r\), not \(1 \le j \le n\), since \(j\) indexes the variables
\(x_1,\dots,x_r\); in Theorem 3.2 and equation (19) the solution vector is
written \((b_1,\dots,b_r) \in \NN^{r}\) although only \(b_v,\dots,b_r\) are
constrained, so that as literally stated the solution set is infinite for
\(v > 1\) --- the proof supplies the missing condition \(b_j = 0\) for \(j < v\)
immediately, and the corresponding statement in Theorem 1.2(III) is correct as
printed; the caption of Figure 2 names the quantities \(C_{n,i,g}\) where the
text defines \(C_{n,v,d}\); and Theorem 4.3 is attributed to
``Herzog--Aramova'' in the text but ``Aramova--Herzog'' in the abstract and
bibliography. Finally, we take the opportunity to note a small looseness of
attribution. Theorem 4.3 of \cite{Snellman2000} is stated for \emph{stable}
ideals and credited jointly to \cite{AramovaHerzog} and \cite{Peeva}. The
result of \cite{Peeva} (Corollary 1.2 there) assumes the ideal to be
\(0\)-Borel fixed, that is \emph{strongly} stable, and it is in that form that
the joint attribution is standard; see \cite[Thm.~6.16(6)]{McCulloughPeeva}.
The version for merely stable ideals is obtained in \cite{AramovaHerzog},
\S2, where the Koszul cycles exhibited for a stable ideal are observed to have
pairwise vanishing products. None of this affects \cite{Snellman2000}, whose
Proposition 2.6 proves \(I_n\) strongly stable, so that both hypotheses hold;
we note it only because the distinction matters if the argument is reused. An
alternative route to the same conclusion runs through componentwise linearity
\cite{HerzogReinerWelker}, since stable ideals have linear quotients.

\section{Conjecture 4.6}\label{sec:conjecture}

By Corollary 4.4 of \cite{Snellman2000}, the Poincaré--Betti series of the
residue field over \(A_n = \Gam_n\) is
\begin{equation}\label{eq:PB}
  P^{A_n}_K(t) = \frac{(1+t)^{r}}{D_n(t)},\qquad
  D_n(t) = 1-t^{2}\sum_{j = 1}^{r}(1+t)^{\,r-j}C_{n,j},
\end{equation}
with \(r = \pi(n)\). Conjecture 4.6 of \cite{Snellman2000} asserted that this
fraction reduces to
\begin{equation}\label{eq:conj}
  P^{A_n}_K(t) = -\frac{(1+t)^{\ell_1(n)}}{q_n(t)},\qquad
  q_n(t) = \sum_{i = 0}^{\ell_2(n)}h_i(n)t^{i},
\end{equation}
where
\begin{enumerate}
  \item \(q_n(-1) \ne 0\);
  \item \(\ell_1(n) = \#\{\,p \text{ odd prime} : p^{2} \le n\,\}\);
  \item \(\ell_2(n) = \ell_1(n)+1\);
  \item \(h_0(n) = -1\);
  \item \(h_1(n) = \pi(n)-\ell_1(n)\);
  \item \(h_{\ell_2(n)}(n) = C_{n,1} = \lceil n/2\rceil\).
\end{enumerate}
We prove all six.

The point of the next lemma is that the whole conjecture is controlled by a
single quantity: the order of vanishing of \(D_n\) at \(t = -1\).

\begin{lemma}\label{lem:secondiff}
  Put \(u = 1+t\) and \(c_k = C_{n,r-k}\) for \(0 \le k \le r-1\), with \(c_k = 0\)
  otherwise. Then the coefficient of \(u^{0}\) in \(D_n\) is \(1-c_0\), and for
  \(k \ge 1\) the coefficient of \(u^{k}\) is
  \(-\left(c_k-2c_{k-1}+c_{k-2}\right)\).
\end{lemma}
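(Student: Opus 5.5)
We substitute \(t = u-1\) into \(D_n\) and expand in powers of \(u\). Re-indexing by \(k = r-j\) (so \(j = r-k\), and \(k\) runs over \(0,\dots,r-1\)), the sum becomes
\[
  \sum_{j=1}^{r}(1+t)^{r-j}C_{n,j} = \sum_{k=0}^{r-1}c_k u^{k} =: c(u),
\]
a polynomial in \(u\) whose coefficients are exactly the \(c_k\). With the convention \(c_k = 0\) outside \(0\le k\le r-1\), this is a finite formal sum over all \(k\ge 0\). The factor \(t^{2}\) becomes \((u-1)^{2} = 1-2u+u^{2}\). Hence
\[
  D_n = 1-(1-2u+u^{2})\,c(u).
\]

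Next we read off the coefficients by the Cauchy product. The coefficient of \(u^{k}\) in \((1-2u+u^{2})c(u)\) is \(c_k-2c_{k-1}+c_{k-2}\), where \(c_{-1}=c_{-2}=0\).

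\begin{itemize}
  \item At \(k=0\) this coefficient is \(c_0\). Together with the constant \(1\), the coefficient of \(u^{0}\) in \(D_n\) is \(1-c_0\).
  \item For \(k\ge1\) the constant \(1\) contributes nothing. The coefficient of \(u^{k}\) in \(D_n\) is therefore \(-(c_k-2c_{k-1}+c_{k-2})\).
\end{itemize}

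At \(k=1\) this reads \(-(c_1-2c_0)\), consistent with \(c_{-1}=0\). For \(k=r\) and \(k=r+1\) the formula still holds with the zero convention, giving \(-(-2c_{r-1}+c_{r-2})\) and \(-c_{r-1}\) respectively, and all higher coefficients vanish.

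There is no real obstacle here. The only points needing care are the bookkeeping of the index reversal \(k=r-j\), which matches the paper's definition \(c_k = C_{n,r-k}\), and checking that the boundary cases \(k=1,\,r,\,r+1\) are covered by the zero-extension convention rather than needing separate statements.
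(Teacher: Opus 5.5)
Your proof is correct and is the paper's own argument: reindex by \(k=r-j\) to get \(D_n = 1-(u-1)^{2}\sum_{k=0}^{r-1}c_k u^{k}\), then expand \((u-1)^{2}\) and read off coefficients. Your explicit check of the boundary coefficients (\(k=1, r, r+1\)) through the zero-extension convention is a harmless addition that the paper leaves implicit.
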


\begin{proof}
  Reindexing \eqref{eq:PB} by \(k = r-j\) gives
  \(D_n = 1-(u-1)^{2}T(u)\) with \(T(u) = \sum_{k = 0}^{r-1}c_ku^{k}\); expand
  \((u-1)^{2} = u^{2}-2u+1\).
\end{proof}

\begin{lemma}\label{lem:c0}
  \(c_0 = C_{n,\pi(n)} = 1\) for every \(n \ge 2\). Consequently \(D_n(-1) = 0\).
\end{lemma}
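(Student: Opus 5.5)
By Theorem~\ref{thm:main}, \(c_0 = C_{n,\pi(n)} = \Phi(n,p_r)\) with \(r = \pi(n)\). This counts the integers in \([1,n]\) having no prime factor \(\le p_r\).

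The integer \(1\) is always counted, vacuously. Conversely, take any \(m > 1\) with \(m \le n\). Its least prime factor satisfies \(\lpf(m) \le m \le n\), and every prime \(\le n\) is among \(p_1,\dots,p_r\). Hence \(\lpf(m) \le p_r\), so \(m\) is excluded. Therefore \(\Phi(n,p_r) = 1\).

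The same conclusion also follows from Corollary~\ref{cor:split} with \(v = r\). It gives \(C_{n,r} = (\pi(n)-r+1) + E(n,r) = 1 + E(n,r)\). Here \(E(n,r) = 0\), because \(p_{r+1} > n\) and so \(p_{r+1}^2 > n\). Either route works; I would present the direct sieve argument and cite the corollary as a check.

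For the consequence, I evaluate \(D_n\) at \(t = -1\), which is \(u = 0\). Lemma~\ref{lem:secondiff} says the constant coefficient of \(D_n\) as a polynomial in \(u\) is \(1 - c_0\). This is \(0\) by the first part, so \(D_n(-1) = 0\).

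One could also substitute directly into \eqref{eq:PB}. At \(t = -1\) every term \((1+t)^{r-j}\) with \(j < r\) vanishes. The prefactor is \(t^2 = 1\), and the term \(j = r\) contributes \(C_{n,r} = 1\), so \(D_n(-1) = 1 - 1 = 0\).

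There is no real obstacle here. The only point needing care is that \(r = \pi(n)\) exactly, so that \(p_r\) is the largest prime \(\le n\). This requires \(n \ge 2\), which also guarantees that \(r \ge 1\) and that the index \(v = r\) is in range.
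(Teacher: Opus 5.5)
Your proof is correct and is the paper's own argument: you identify \(c_0\) with \(\Phi(n,p_r)\) via Theorem~\ref{thm:main}, note that \(1\) is the only integer in \([1,n]\) with no prime factor \(\le p_r\), and read off \(D_n(-1)=0\) from the constant coefficient \(1-c_0\) in Lemma~\ref{lem:secondiff}. Your two cross-checks are also valid: Corollary~\ref{cor:split} at \(v=r\), and direct substitution of \(t=-1\) into \eqref{eq:PB}, where only the \(j=r\) term survives.
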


\begin{proof}
  \(\Phi(n,p_r)\) counts the integers in \([1,n]\) with no prime factor
  \(\le p_r\); as \(p_r\) is the largest prime \(\le n\), the only such integer
  is \(1\). Now apply Theorem~\ref{thm:main} and Lemma~\ref{lem:secondiff}.
\end{proof}

\begin{proposition}\label{prop:order}
  The order of vanishing of \(D_n(t)\) at \(t = -1\) equals \(\pi(n)-\ell_1(n)\).
\end{proposition}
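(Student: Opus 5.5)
The plan is to expand \(D_n\) in powers of \(u=1+t\) using Lemma~\ref{lem:secondiff}, and to read each coefficient \(c_k=C_{n,r-k}\) through Corollary~\ref{cor:split}. The order of vanishing at \(t=-1\) is the index of the first nonzero coefficient of this expansion.

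\emph{Step 1: split each coefficient.} For \(0\le k\le r-1\), Corollary~\ref{cor:split} with \(v=r-k\) gives
\[
  c_k=(k+1)+e_k,\qquad e_k:=E(n,r-k).
\]
Put \(e_k=0\) for \(k<0\). The linear part \(L_k=k+1\), extended by \(L_{-1}=0\), has vanishing second difference at every \(k\) with \(1\le k\le r-1\). At \(k=1\) this is \(2-2\cdot 1+0=0\), and for \(2\le k\le r-1\) it holds because \(L\) is linear. By Lemma~\ref{lem:c0} the \(u^0\) coefficient \(1-c_0\) vanishes. So for \(1\le k\le r-1\) the coefficient of \(u^k\) in \(D_n\) is
\[
  -\bigl(e_k-2e_{k-1}+e_{k-2}\bigr).
\]

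\emph{Step 2: locate the first nonzero \(e_k\).} Let \(s=\#\{p\ \text{prime}: p^2\le n\}\). By the equality criterion in Corollary~\ref{cor:split}, \(E(n,v)=0\) if and only if \(p_{v+1}^2>n\), that is, if and only if \(v\ge s\). Hence \(e_k=0\) exactly for \(k\le r-s\), and \(e_k>0\) for \(r-s<k\le r-1\).

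\emph{Step 3: the case \(s\ge 2\), i.e.\ \(n\ge 9\).} Put \(k_0=r-s+1\). Then \(k_0\le r-1\), and \(e_{k_0-1}=e_{k_0-2}=0\). So the coefficient of \(u^{k_0}\) is \(-e_{k_0}\ne 0\), and all lower coefficients vanish. Since \(2\) is the only even prime, \(s=\ell_1(n)+1\), and therefore \(k_0=r-\ell_1(n)\), as claimed.

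\emph{Step 4: the boundary cases \(s\le 1\), i.e.\ \(n\le 8\).} Here \(\ell_1(n)=0\), and \(e_k=0\) for all \(k\le r-1\), since \(v=r-k\ge 1\ge s\). So \(c_k=k+1\) for \(0\le k\le r-1\), and every coefficient of \(u^k\) with \(k\le r-1\) vanishes. At \(k=r\) we have \(c_r=0\), so the coefficient is
\[
  -\bigl(0-2c_{r-1}+c_{r-2}\bigr)=2r-(r-1)=r+1\neq 0 .
\]
When \(r=1\) this uses the convention \(c_{-1}=0\) and gives \(2\ne 0\). The order is therefore \(r=\pi(n)-\ell_1(n)\) in this range too.

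The only delicate point is the bookkeeping at the ends of the index range. This means the convention \(c_k=0\) outside \(0\le k\le r-1\), the \(k=1\) coefficient where \(c_{-1}=0\) enters, and the small \(n\) for which the first nonzero coefficient falls at \(k=r\), outside the range covered by Corollary~\ref{cor:split}. The identification \(s=\ell_1(n)+1\) also needs a check: it holds only when \(2^2\le n\), and the case \(n<4\) is absorbed into Step~4.
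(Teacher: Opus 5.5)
Your proof is correct and is essentially the paper's argument. The paper proves by induction that the second differences vanish up to \(K\) exactly when \(c_k = k+1\) for all \(k \le K\), so the order is the least \(k\) with \(c_k \ne k+1\); you instead subtract the linear part explicitly and read the first nonzero coefficient as \(-e_{k_0}\), at the first index where \(E(n,r-k)>0\). That is the same content, and the boundary case \(n \le 8\), where the coefficient of \(u^{r}\) is \(r+1\), is handled identically in both.
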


\begin{proof}
  By Lemmas~\ref{lem:secondiff} and \ref{lem:c0} the order is the least
  \(k \ge 1\) with \(c_k-2c_{k-1}+c_{k-2} \ne 0\). Since \(c_{-1} = 0\) and
  \(c_0 = 1\), an induction shows that the second differences vanish for all
  \(k \le K\) precisely when \(c_k = k+1\) for all \(k \le K\). So the order is the
  least \(k \ge 1\) with
  \[
    c_k \ne k+1,\qquad\text{that is,}\qquad C_{n,r-k} \ne r-(r-k)+1 .
  \]

  By Corollary~\ref{cor:split},
  \[
    C_{n,v} \ne r-v+1
    \iff p_{v+1}^{2} \le n
    \iff p_{v+1} \le \sqrt n .
  \]
  Suppose first that some admissible \(v\) has this property; since
  \(p_2^{2} = 9\), that requires \(n \ge 9\). The largest such \(v\) is then
  \[
    v = \pi\bigl(\lfloor\sqrt n\rfloor\bigr)-1
     =\#\{\,p\text{ odd} : p^{2} \le n\,\} = \ell_1(n)
     \qquad(n \ge 9),
  \]
  the two counts differing only by the prime \(2\). Hence the least admissible
  \(k\) is \(r-\ell_1(n)\). (The identity between the two counts in fact holds
  for every \(n \ge 4\); what needs \(n \ge 9\) is that the set of such \(v\) is
  non-empty.)

  If no such \(v\) exists --- equivalently \(\ell_1(n) = 0\), equivalently
  \(n \le 8\) --- then \(c_k = k+1\) for all
  \(k \le r-1\), and the first non-vanishing coefficient is that of \(u^{r}\),
  namely
  \[
    -\bigl(0-2r+(r-1)\bigr) = r+1 \ne 0;
  \]
  the formula \(r-\ell_1(n) = r\) still holds.
\end{proof}

\begin{theorem}\label{thm:conj}
  Let \(n \ge 2\). Conjecture 4.6 of \cite{Snellman2000} holds: with
  \(\ell_1(n)\) and \(\ell_2(n)\) as above, \eqref{eq:conj} holds together with
  clauses (1)--(6). (At \(n = 1\) the statement degenerates: \(r = 0\), \(D_1 = 1\),
  and clause (3) fails, \(\deg q_1 = 0 \ne 1 = \ell_2(1)\).)
\end{theorem}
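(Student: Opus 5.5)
The whole theorem follows from Proposition~\ref{prop:order} by factoring \(D_n\) at \(t=-1\) and reading off three coefficients, so the plan is to do exactly that. Fix \(n \ge 2\), so \(r = \pi(n) \ge 1\), and put \(s = r-\ell_1(n)\). By Proposition~\ref{prop:order} we can write
\[
  D_n(t) = (1+t)^{s}E(t),\qquad E \in K[t],\quad E(-1) \ne 0 .
\]
Since \(s \le r\), substituting into \eqref{eq:PB} gives
\[
  P^{A_n}_K(t) = \frac{(1+t)^{r-s}}{E(t)} = -\frac{(1+t)^{\ell_1(n)}}{q_n(t)},\qquad q_n := -E .
\]
This establishes \eqref{eq:conj} with exponent \(\ell_1(n)\), which is clause (2). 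Clause (1) is immediate, since \(q_n(-1) = -E(-1) \ne 0\). The remaining clauses concern the degree and coefficients of \(E\), and I would get them from the corresponding data of \(D_n\).

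\emph{Degree and leading coefficient.} In \(D_n = 1-t^{2}\sum_{j}(1+t)^{r-j}C_{n,j}\), the summand with the largest degree is \(j=1\), of degree \(r+1\). Every other summand has degree \(\le r\), and the constant \(1\) has degree \(0 < r+1\). Hence \(\deg D_n = r+1\), with leading coefficient \(-C_{n,1}\). This needs \(C_{n,1} \ne 0\), and indeed \(C_{n,1} = \Phi(n,2)\) by Theorem~\ref{thm:main}. That is the number of odd integers in \([1,n]\), namely \(\lceil n/2\rceil \ge 1\). Since \((1+t)^s\) is monic, \(\deg E = r+1-s = \ell_1(n)+1 = \ell_2(n)\), which is clause (3). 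The leading coefficient of \(E\) is \(-C_{n,1}\), so \(h_{\ell_2(n)}(n) = C_{n,1} = \lceil n/2\rceil\), which is clause (6).

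\emph{Low-order coefficients.} Every term of the sum in \(D_n\) carries the factor \(t^{2}\), so \(D_n(t) = 1 + 0\cdot t + O(t^{2})\). Write \(E(t) = e_0 + e_1 t + \cdots\). Comparing constant terms in \((1+t)^sE(t)\) gives \(e_0 = 1\), hence \(h_0(n) = -1\), which is clause (4). Comparing coefficients of \(t\) gives \(s e_0 + e_1 = 0\), so \(e_1 = -s\) and \(h_1(n) = s = \pi(n)-\ell_1(n)\), which is clause (5).

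Obstacles and the degenerate case. There is no real obstacle left at this stage: all the arithmetic content, namely the exact order of vanishing, sits in Proposition~\ref{prop:order} and ultimately in Theorem~\ref{thm:main} via Corollary~\ref{cor:split}. The only points needing care are that \(r \ge 1\), so that the \(j=1\) summand exists and dominates the degree, and that \(C_{n,1} \ne 0\). Both fail exactly at \(n=1\). There \(r=0\) and \(D_1 = 1\), so \(q_1 = -1\) has degree \(0\), not \(\ell_2(1) = 1\); this is the degeneration recorded in the statement.
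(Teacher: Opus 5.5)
Your proposal is correct and follows the paper's proof essentially step for step: you factor \(D_n=(1+t)^{\pi(n)-\ell_1(n)}E(t)\) via Proposition~\ref{prop:order} and set \(q_n=-E\). You then read off the degree and leading coefficient from the \(j=1\) summand, and extract \(h_0,h_1\) from \(D_n=1+0\cdot t+O(t^2)\). Comparing coefficients, as you do, is the same computation as the paper's differentiation at \(t=0\), and your explicit check that \(C_{n,1}=\lceil n/2\rceil\ne 0\) makes visible a point the paper leaves implicit.
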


\begin{proof}
  By Proposition~\ref{prop:order} we may write
  \(D_n(t) = (1+t)^{\,r-\ell_1}d(t)\) with \(d(-1) \ne 0\); set \(q_n = -d\). Then
  \eqref{eq:PB} gives
  \[
    P^{A_n}_K
      =\frac{(1+t)^{r}}{(1+t)^{\,r-\ell_1}\,(-q_n)}
      =-\frac{(1+t)^{\ell_1}}{q_n},
  \]
  which is \eqref{eq:conj} together with clause (2). Clause (1) is
  \(d(-1) \ne 0\).

  For the remaining clauses, note first that \(D_n(0) = 1\) and \(D_n'(0) = 0\),
  since \(D_n-1 = -t^{2}(\cdots)\) vanishes to order \(2\) at \(t = 0\); and that
  \(\deg D_n = r+1\) with leading coefficient \(-C_{n,1}\), the top-degree term
  of \eqref{eq:PB} arising only from \(j = 1\).

  (3) We have
  \[
    \deg q_n = \deg D_n-(r-\ell_1) = (r+1)-(r-\ell_1) = \ell_1+1 = \ell_2 .
  \]

  (4) \(h_0 = q_n(0) = -D_n(0)/1^{\,r-\ell_1} = -1\).

  (5) Differentiating \(q_n = -D_n(1+t)^{-(r-\ell_1)}\) at \(t = 0\) gives
  \[
    h_1 = q_n'(0) = -D_n'(0)+(r-\ell_1)D_n(0) = r-\ell_1 .
  \]

  (6) The leading coefficient of \(q_n\) is minus that of \(D_n\), namely
  \(C_{n,1}\), which is \(\Phi(n,2) = \lceil n/2\rceil\) by Theorem
  \ref{thm:main}.
\end{proof}

\begin{remark}
  Clause (2) --- the identification of the numerator exponent --- was the whole
  difficulty, and Proposition~\ref{prop:order} shows why: it is the assertion
  that the sequence \(v\mapsto C_{n,v}\) is exactly linear, of slope \(-1\), on
  the range \(v > \ell_1(n)\), and departs from linearity at \(v = \ell_1(n)\).
  Corollary~\ref{cor:split} makes both halves of that assertion visible at
  once, because it measures the departure from linearity by \(E(n,v)\), a count
  of composites all of whose prime factors exceed \(p_v\), which is positive
  exactly when
  \(p_{v+1}^{2} \le n\). Figure~\ref{fig:profile} shows the effect.
\end{remark}

\begin{figure}[ht]
  \centering
  \includegraphics[width=\linewidth]{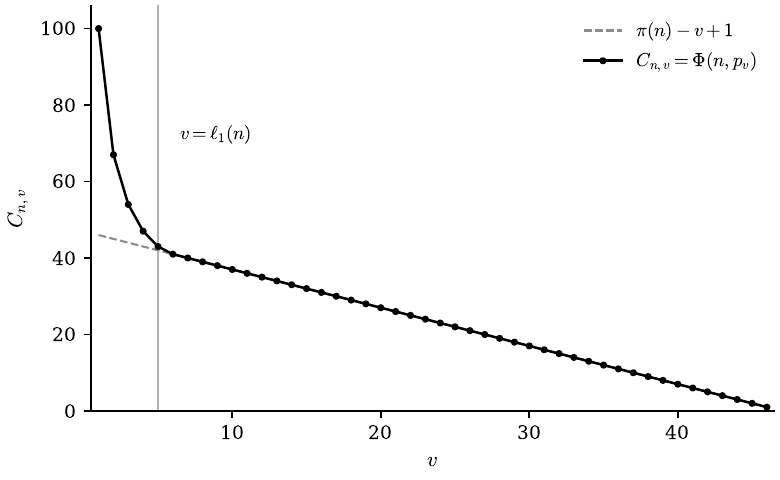}
  \caption{The profile \(v\mapsto C_{n,v} = \Phi(n,p_v)\) for \(n = 200\), against
    the line \(\pi(n)-v+1\) of Corollary~\ref{cor:split}. The vertical distance
    between them is \(E(n,v)\), the number of composites \(\le n\) all of whose
    prime factors exceed \(p_v\); here \(\pi(n) = 46\) and \(\ell_1(n) = 5\), and the two coincide
    for every \(v > 5\). That the profile is exactly linear of slope \(-1\)
    beyond \(v = \ell_1(n)\), and only there, is what Proposition
    \ref{prop:order} extracts as the order of vanishing of \(D_n\) at
    \(t = -1\).}\label{fig:profile}
\end{figure}

\section{The average order of \texorpdfstring{\(C_n\)}{C\_n}}\label{sec:asymptotics}

Theorem 3.4(3) of \cite{Snellman2000} gives the lower bound
\(C_n \ge \binom{\pi(n)+1}{2}\), proved there by exhibiting one minimal generator
for each support of size \(1\) or \(2\). We show that essentially nothing else
contributes in the limit.

\begin{theorem}\label{thm:asymp}
  For \(n \ge 4\),
  \(\displaystyle 0 \le C_n-\binom{\pi(n)+1}{2} \le n\bigl(\pi(\sqrt n)-1\bigr)
    =O\!\left(n^{3/2}/\log n\right)\), and consequently
  \[
    C_n \sim \binom{\pi(n)+1}{2} \sim \frac{\pi(n)^{2}}{2} \sim \frac{n^{2}}{2\log^{2}n}.
  \]
  In particular the bound of Theorem 3.4(3) of \cite{Snellman2000} is
  asymptotically an equality.
\end{theorem}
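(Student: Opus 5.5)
We start from Corollary~\ref{cor:total}, which gives the exact identity
\[
  C_n-\binom{\pi(n)+1}{2}=\sum_{\substack{x\le n\\ x\text{ composite}}}\bigl(\pi(\lpf x)-1\bigr).
\]
Every summand is nonnegative, since \(\pi(\lpf x)\ge 1\); this gives the lower bound \(0\le C_n-\binom{\pi(n)+1}{2}\) at once.

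For the upper bound, note that a composite \(x\le n\) satisfies \(\lpf(x)^2\le x\le n\), so \(\lpf x\le\sqrt n\). Hence each summand is at most \(\pi(\sqrt n)-1\), and there are fewer than \(n\) composites in \([1,n]\). This yields
\[
  C_n-\binom{\pi(n)+1}{2}\le n\bigl(\pi(\sqrt n)-1\bigr).
\]
(We could sharpen this by counting only composites, or by using \(\sum_{x} \pi(\lpf x)\) weighted by density, but the crude bound suffices.) Chebyshev's bound \(\pi(y)=O(y/\log y)\) then gives \(\pi(\sqrt n)=O(\sqrt n/\log n)\), so the error term is \(O(n^{3/2}/\log n)\). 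The hypothesis \(n\ge4\) only ensures that \(\sqrt n\ge 2\), so the expression \(\pi(\sqrt n)-1\) is nonnegative and the statement is non-vacuous.

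For the asymptotics, we have \(\binom{\pi(n)+1}{2}=\tfrac12\pi(n)^2(1+O(1/\pi(n)))\sim\pi(n)^2/2\). By the prime number theorem this is \(\sim n^2/(2\log^2 n)\). The error \(O(n^{3/2}/\log n)\) is \(o(n^2/\log^2 n)\), because the ratio is \(O(\log n/\sqrt n)\to0\). Therefore \(C_n\sim\binom{\pi(n)+1}{2}\), and the final sentence about Theorem 3.4(3) of \cite{Snellman2000} is a restatement of this.

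There is no real obstacle here. The only point needing care is the observation \(\lpf x\le\sqrt n\) for composites, together with citing the prime number theorem (Chebyshev suffices for the error term, but the PNT is needed for the last \(\sim\)).
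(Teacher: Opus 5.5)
Your proof is correct and follows the paper's argument essentially step for step: the exact identity of Corollary~\ref{cor:total}, non-negativity of the summands, and the bound \(\lpf x\le\sqrt n\) for composites, which gives at most \(\pi(\sqrt n)-1\) per term over fewer than \(n\) terms; the prime number theorem then shows that the main term \(\binom{\pi(n)+1}{2}\) dominates. You also note that Chebyshev's bound already gives the \(O(n^{3/2}/\log n)\) estimate, so the prime number theorem is needed only for the final \(\sim n^{2}/(2\log^{2}n)\); this is a small but accurate sharpening of the paper's wording.
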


\begin{proof}
  By Corollary~\ref{cor:total} the difference equals
  \[
    C_n-\binom{\pi(n)+1}{2} = \sum_{x \le n\text{ composite}}\bigl(\pi(\lpf x)-1\bigr),
  \]
  a sum of non-negative terms. If \(x \le n\) is composite then
  \(\lpf x \le \sqrt x \le \sqrt n\), so each term is at most \(\pi(\sqrt n)-1\), and
  there are fewer than \(n\) composites \(\le n\); this gives the upper bound.
  The prime number theorem gives \(\pi(\sqrt n) \sim 2\sqrt n/\log n\), whence
  \[
    n\bigl(\pi(\sqrt n)-1\bigr) = O\!\left(\frac{n^{3/2}}{\log n}\right),
  \]
  while
  \[
    \binom{r+1}{2} \sim \frac{\pi(n)^{2}}{2} \sim \frac{n^{2}}{2\log^{2}n},
  \]
  which dominates it.
\end{proof}

\begin{corollary}
  \(C_n=\)\;\seqnum{A182843}\((n+1)\).
\end{corollary}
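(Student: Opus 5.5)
The plan is to avoid matching \(G(I_n)\) to the OEIS entry combinatorially. Instead I would compare two descriptions of the same integer sequence: the closed form of Corollary~\ref{cor:total} on our side, and the defining formula of \seqnum{A182843} on the other. Both sequences are determined by an initial value and a first difference, so it suffices to check that they start at the same value and have the same increments, and then induct on \(n\).

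On our side, the Remark following Corollary~\ref{cor:total} already gives the increment
\[
  C_n-C_{n-1}=[\,n\text{ prime}\,]+\bigl(\pi(\lpf n)-1\bigr),
\]
valid for \(n\ge 3\). For the base, note that \(r=1\) at \(n=2\), so \(C_2=\binom{2}{2}=1\): the sum over composites is empty and \(I_2=(x_1^2)\). Similarly \(C_3=\binom{3}{2}=3\), with \(I_3=(x_1^2,x_1x_2,x_2^2)\).

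On the OEIS side, I would take the defining formula of \seqnum{A182843} as stated in the entry, not Costello's empirical increment, since that increment is exactly what we proved and so cannot be used as input. I would rewrite the definition so that the difference \(a(n+1)-a(n)\) becomes visible.

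\begin{itemize}
  \item If the entry is defined as a partial sum of some summand \(g\), the task reduces to the pointwise identity \(g(n)=[\,n\text{ prime}\,]+\pi(\lpf n)-1\).
  \item If it is defined as a count of pairs or lattice points with some prime condition, I would group the objects by their largest element. The objects new at stage \(n\) should then be \(\pi(n)\) objects when \(n\) is prime, or \(\pi(\lpf n)-1\) objects when \(n\) is composite.
\end{itemize}

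Finally I would verify the offset by comparing the first few terms, so that index \(n+1\) there corresponds to \(n\) here. The paper's computed table from the repository \cite{gitrepo} serves as a sanity check. The induction then closes.

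The main obstacle is the middle step: proving that the OEIS definition really has this increment. The author's phrasing (``recorded empirically by Costello'') suggests the entry's own definition is not visibly a sieve count. If a direct algebraic rewriting fails, my fallback is a bijection. The entry is indexed by \(n+1\), which suggests it counts something up to \(n\) with one shifted convention. I would try to identify the objects it counts at stage \(n\) with pairs \((v,x)\) such that \(x\le n\) and \(x\) is \(p_v\)-rough in the sense of \(\Phi\). By Theorem~\ref{thm:main}, summing over \(v\), these pairs number exactly \(C_n\), which gives the identity without passing through increments at all.
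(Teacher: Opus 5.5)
Your proposal leaves the one substantive step undone: you never say what \seqnum{A182843} is, so nothing is actually proved about it. The base values \(C_2=1\), \(C_3=3\) and the increment from the Remark after Corollary~\ref{cor:total} are facts about \(C_n\) alone. The link to the OEIS entry is deferred to a case split over hypothetical shapes of its definition, and to a fallback bijection whose target you guess rather than derive. Matching first terms, numerically or against \cite{gitrepo}, cannot replace that link. The missing input is the definition itself: \seqnum{A182843}\((m)\) counts the composite integers \(W\ge m\) all of whose proper divisors are \(<m\). The largest proper divisor of a composite \(W\) is \(W/\lpf W\). So for \(m=n+1\) the entry counts the composite \(W\) with \(n<W\le n\,\lpf W\).

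With that in hand, no induction is needed and the detour through \(\Phi\) is unnecessary. A monomial of weight \(W\) lies in \(G(I_n)\) exactly when \(W>n\) and \(W/p\le n\) for every prime \(p\mid W\). Only \(p=\lpf W\) is binding, since \(W/p\) is largest there. Every generator has degree at least \(2\) by Remark~\ref{rem:m2}, so \(W\) is composite. The objects counted by the OEIS entry are therefore literally the weights of the minimal generators of \(I_n\); this is the paper's proof.

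Your increment route can also be completed once the definition is known. Passing from \(m\) to \(m+1\) adds the \(W=mp\) with \(p\) prime and \(p\le\lpf m\), of which there are \(\pi(\lpf m)\). It removes \(W=m\) exactly when \(m\) is composite. So the difference \seqnum{A182843}\((m+1)\) minus \seqnum{A182843}\((m)\) is \(\pi(\lpf m)-[\,m\text{ composite}\,]\), which at \(m=n\) is the Remark's increment. That argument would have to be written out, though, and it is more work than the direct identification it replaces.
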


\begin{proof}
  \seqnum{A182843}\((m)\) is defined as the number of composite integers
  \(\ge m\) all of whose proper divisors are less than \(m\). A composite \(W\)
  has largest proper divisor \(W/\lpf W\), so the defining condition for
  \(m = n+1\) reads \(W > n\) and \(W \le n\lpf W\). These are exactly the \(W\)
  corresponding to minimal generators of \(I_n\): a monomial of weight \(W\)
  lies in \(G(I_n)\) precisely when \(W > n\) and \(W/p \le n\) for every prime
  \(p \mid W\). Of those last conditions only one needs checking, namely the one
  at \(p = \lpf W\), because \(W/p\) is largest for the smallest \(p\); so
  \(W/p \le n\) for all \(p \mid W\) is equivalent to \(W/\lpf W \le n\), that is,
  to \(W \le n\lpf W\).
\end{proof}

The upper bound in Theorem~\ref{thm:asymp} is far from best possible: almost
all integers are even and contribute nothing. We now determine the error term
exactly. Write
\begin{equation}\label{eq:Sdef}
  S(n) = C_n-\binom{\pi(n)+1}{2}
      =\sum_{x \le n\text{ composite}}\bigl(\pi(\lpf x)-1\bigr),
\end{equation}
and let \(S_2(n)\) be the part of the sum coming from \(x\) with \(\Om(x) = 2\).

\begin{lemma}\label{lem:semiprime}
  \(\displaystyle
    S_2(n) = \sum_{p \le \sqrt n}\bigl(\pi(p)-1\bigr)
             \bigl(\pi(n/p)-\pi(p)+1\bigr)\)
  (the sum over primes), and
  \(0 \le S(n)-S_2(n)\ll n^{4/3}/\log^{2}n\).
\end{lemma}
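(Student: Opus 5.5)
The plan has two parts: an exact count for the semiprime part \(S_2(n)\), and a crude but sufficient bound for the contribution of composites with \(\Om(x)\ge 3\).

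\emph{The semiprime sum.} Every \(x\) with \(\Om(x)=2\) can be written uniquely as \(x=pq\) with primes \(p\le q\), and then \(\lpf x=p\). The conditions \(p\le q\) and \(pq\le n\) force \(p\le\sqrt n\). For a fixed prime \(p\le\sqrt n\), the admissible \(q\) are the primes in \([p,n/p]\). Since \(p\le n/p\), there are \(\pi(n/p)-\pi(p)+1\) of them. Each such \(x\) contributes \(\pi(p)-1\) to \eqref{eq:Sdef}. Summing over \(p\) gives the stated formula. This step is pure bookkeeping.

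\emph{Reduction for the tail.} The difference \(S(n)-S_2(n)\) is the same sum taken over \(x\le n\) with \(\Om(x)\ge 3\). Every term is nonnegative, which gives the lower bound \(0\). For such \(x\) we have \(\lpf x\le x^{1/3}\le n^{1/3}\). Group these \(x\) by \(p=\lpf x\) and write \(x=pm\), where \(m\le n/p\) is composite with \(\lpf m\ge p\). This gives
\[
  S(n)-S_2(n)\le\sum_{p\le n^{1/3}}\bigl(\pi(p)-1\bigr)\,\#\{\,m\le n/p : \lpf m\ge p\,\}.
\]

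\emph{Bounding the inner count.} The inner count is at most the number of \(p\)-rough integers up to \(n/p\). A trivial bound on it loses too much, so this is the main obstacle. The natural route is a sieve upper bound: for \(p\le y^{1/2}\), the number of \(p\)-rough integers up to \(y\) is \(\ll y/\log p\). This follows from Brun's sieve or from the standard bound \(\Phi(y,z)\ll y/\log z\), which is valid uniformly for \(2\le z\le y\), combined with \eqref{eq:Rsum}. Here \(y=n/p\ge n^{2/3}\ge p^2\), so the bound applies, and using \(\pi(p)\ll p/\log p\) we get
\[
  S(n)-S_2(n)\ll\sum_{p\le n^{1/3}}\frac{p}{\log p}\cdot\frac{n}{p\log p}
  = n\sum_{p\le n^{1/3}}\frac{1}{\log^{2}p}.
\]
By the prime number theorem the last sum is \(\ll n^{1/3}/\log^{3}n\). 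That yields \(n^{4/3}/\log^{3}n\), which is even better than the stated bound.

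If the small primes \(p\) cause trouble (there \(1/\log p\) is not small), I will split off bounded \(p\) and treat them trivially. For bounded \(p\) the weight \(\pi(p)-1\) is \(O(1)\), but the count of \(x\) is of order \(n\), giving a contribution of order \(n\). That is still \(\ll n^{4/3}/\log^{2}n\). The only delicate input is the uniformity of the rough-number upper bound \(\Phi(y,z)\ll y/\log z\) in \(z\). I would cite it from the sieve literature rather than reprove it.
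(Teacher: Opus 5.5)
Your proof is correct, but it differs from the paper's at exactly the step you flag as the main obstacle, and that obstacle is not real. The paper bounds the number of \(x\le n\) with \(\lpf x=p\) by the trivial \(n/p\), the number of multiples of \(p\) up to \(n\). By Chebyshev, each prime \(p\le n^{1/3}\) then contributes \(\bigl(\pi(p)-1\bigr)n/p\ll n/\log p\). Partial summation gives \(\sum_{p\le Y}1/\log p\ll Y/\log^{2}Y\), and with \(Y=n^{1/3}\) this yields \(n^{4/3}/\log^{2}n\), which is precisely the bound stated. So the trivial bound does not lose too much; the power of \(\log\) in the lemma is exactly what it produces.

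Your route replaces \(n/p\) by the upper-bound sieve estimate \(\Phi(y,z)\ll y/\log z\), uniform in \(2\le z\le y\), applied through \eqref{eq:Rsum} with \(y=n/p\ge p^{2}\). That is valid and saves one further logarithm, giving \(n^{4/3}/\log^{3}n\). The price is a genuine sieve input (Brun or Selberg), which you cite rather than prove, whereas the paper needs only \(\pi(y)\ll y/\log y\) and partial summation.

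The extra logarithm buys nothing downstream. For Theorem~\ref{thm:error} the tail only has to be \(o\bigl(n^{3/2}/\log^{3}n\bigr)\), and either bound beats that by a power of \(n\). Two smaller points: your final summation needs only Chebyshev, not the prime number theorem; and the contingency for small \(p\) is unnecessary, because the sieve bound you cite is already uniform down to \(z=2\). Your count of the semiprime part is the same bookkeeping as the paper's.
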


\begin{proof}
  If \(\Om(x) = 2\) then \(x = pq\) with \(p \le q\) prime, \(\lpf x = p\) and
  \(p \le \sqrt n\); for fixed \(p\) the admissible \(q\) are the primes in
  \([p,n/p]\), of which there are \(\pi(n/p)-\pi(p)+1\), and each contributes
  the weight \(\pi(p)-1\). If instead \(\Om(x) \ge 3\) then
  \((\lpf x)^{3} \le x \le n\), so \(\lpf x \le n^{1/3}\), and \(x\) is a multiple
  of \(\lpf x\); hence
  \[
    0 \le S(n)-S_2(n) \le \sum_{p \le n^{1/3}}\bigl(\pi(p)-1\bigr)\frac{n}{p}
    \ll n\sum_{p \le n^{1/3}}\frac 1{\log p}\ll\frac{n^{4/3}}{\log^{2}n},
  \]
  by \(\pi(p)\ll p/\log p\) and then
  \(\sum_{p \le Y}1/\log p\ll Y/\log^{2}Y\).
\end{proof}

Throughout what follows we abbreviate
\begin{equation}\label{eq:XM}
  X = \sqrt n,\qquad M = \log X = \tfrac{1}{2}\log n,\qquad P = \pi(\sqrt n),
\end{equation}
and we write
\begin{equation}\label{eq:gdef}
  g(t) = \bigl(\pi(t)-1\bigr)\bigl(\pi(n/t)-\pi(t)+1\bigr).
\end{equation}
With this notation Lemma~\ref{lem:semiprime} reads
\begin{equation}\label{eq:S2asg}
  S_2(n) = \sum_{p \le X}g(p),
\end{equation}
the sum being over primes.

The next lemma is the reason the rest is short. It removes a full third of the
asymptotics from the analysis, replacing it by an exact elementary sum.

\begin{lemma}[Exact splitting]\label{lem:split}
  Let \(p_1 < p_2 < \dots < p_P\) be the primes \(\le \sqrt n\), and put
  \[
    T(n) = \sum_{j = 1}^{P}(j-1)\,\pi(n/p_j).
  \]
  Then
  \begin{equation}\label{eq:split-exact}
    S_2(n) = T(n)-\frac{(P-1)P(2P-1)}{6}.
  \end{equation}
\end{lemma}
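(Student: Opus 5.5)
The plan is to start from \eqref{eq:S2asg} and expand each term \(g(p_j)\) using the fact that the primes are indexed consecutively. For the \(j\)-th prime we have \(\pi(p_j) = j\) exactly, so
\[
  g(p_j) = (j-1)\bigl(\pi(n/p_j)-j+1\bigr) = (j-1)\,\pi(n/p_j)-(j-1)^{2}.
\]
Summing over \(j = 1,\dots,P\) gives two pieces.

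The first piece is precisely \(T(n)\). The second piece is
\[
  \sum_{j=1}^{P}(j-1)^{2} = \sum_{i=0}^{P-1}i^{2} = \frac{(P-1)P(2P-1)}{6},
\]
which is the standard sum-of-squares formula. Combining the two pieces gives \eqref{eq:split-exact}.

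Two points need checking.
\begin{itemize}
  \item The range of summation in Lemma~\ref{lem:semiprime} is the primes \(p \le \sqrt n\). This is exactly \(p_1,\dots,p_P\) with \(P = \pi(\sqrt n)\), by the definition of \(\pi\).
  \item The term \(\pi(n/p)-\pi(p)+1\) is used there as a genuine count of primes in \([p,n/p]\). This is valid because \(p \le \sqrt n\) implies \(n/p \ge p\). The count is never negative, so no truncation or correction term arises.
\end{itemize}

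There is no real obstacle. The only step that needs care is the identification \(\pi(p_j) = j\), which is what turns an analytic expression into the elementary polynomial sum.
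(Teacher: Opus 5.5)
Your proposal is correct and follows essentially the paper's own argument. You use \(\pi(p_j) = j\) to write \(g(p_j) = (j-1)\pi(n/p_j)-(j-1)^{2}\), then sum over \(j\) and evaluate \(\sum_{i=0}^{P-1}i^{2}\) by the sum-of-squares formula; your second check is harmless but not needed for the algebraic identity itself.
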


\begin{proof}
  Since \(p_j\) is the \(j\)-th prime, \(\pi(p_j) = j\). So the summand of
  \eqref{eq:S2asg} at \(p = p_j\) is
  \[
    g(p_j) = (j-1)\bigl(\pi(n/p_j)-j+1\bigr) = (j-1)\pi(n/p_j)-(j-1)^{2},
  \]
  and summing over \(j\),
  \[
    S_2(n) = T(n)-\sum_{j = 1}^{P}(j-1)^{2}
          =T(n)-\sum_{i = 1}^{P-1}i^{2}
          =T(n)-\frac{(P-1)P(2P-1)}{6}. \qedhere
  \]
\end{proof}

\begin{remark}\label{rem:whytwothirds}
  Lemma~\ref{lem:split} explains where the constant \(\tfrac{2}{3}\) comes from.
  Dividing \eqref{eq:split-exact} by \(P^{3}\) and expanding the second term
  \emph{exactly},
  \begin{equation}\label{eq:sqexact}
    \frac{S_2(n)}{P^{3}} = \frac{T(n)}{P^{3}}-\frac{1}{3}+\frac 1{2P}-\frac 1{6P^{2}} .
  \end{equation}
  So \(\tfrac{2}{3} = 1-\tfrac{1}{3}\), where the \(1\) is the one thing still requiring
  the prime number theorem --- that \(T(n) \sim P^{3}\), Proposition
  \ref{prop:T} below --- and the \(\tfrac{1}{3}\) is \(\sum i^{2} \sim P^{3}/3\),
  which requires nothing at all.
\end{remark}

The substitution governing \(T\) is \(p = Xe^{-a}\), under which \(n/p = Xe^{a}\);
the parameter \(a = \log(X/p) \ge 0\) measures how far below \(\sqrt n\) the
smaller prime factor lies. Note that the summand of \(T\) is, to leading order,
\emph{constant} in \(a\): the two factors \(\pi(p)-1 \approx (X/M)e^{-a}\) and
\(\pi(n/p) \approx (X/M)e^{a}\) have reciprocal exponentials. This is why no
Riemann sum is needed below --- only a count of the primes involved.

\begin{lemma}\label{lem:pnt-unif}
  Let \(A_0\) be an absolute constant for which
  \(|\pi(y)-y/\log y|\le A_0\,y/\log^{2}y\) \((y \ge 2)\). There is an absolute
  constant \(A_1\) such that for every \(W \ge 1\), every \(n\) with
  \(M \ge 2W\), and every real \(a \in [0,W]\),
  \begin{equation}\label{eq:pnt-sub}
    \begin{aligned}
      \pi\bigl(Xe^{-a}\bigr)
        &= \frac{X}{M}\left(e^{-a}+\theta_1\,\frac{A_1(W+1)}{M}\right),\\
      \pi\bigl(Xe^{a}\bigr)
        &= \frac{X}{M}\left(e^{a}+\theta_2\,\frac{A_1(W+1)e^{W}}{M}\right),
    \end{aligned}
  \end{equation}
  where \(|\theta_1|,|\theta_2|\le 1\).

  The error constant in the second line grows with \(W\), because \(e^{a}\)
  itself does; this is what limits how fast \(W\) may be allowed to grow with
  \(n\).
\end{lemma}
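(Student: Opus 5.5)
The plan is to substitute \(y = Xe^{\mp a}\) into the assumed bound \(|\pi(y)-y/\log y|\le A_0\,y/\log^{2}y\), expand \(\log y = M \mp a\), and bound every discrepancy by a multiple of \((W+1)/M\), scaled by \(X/M\). The hypothesis \(M \ge 2W\) guarantees \(a \le W \le M/2\). Hence \(\log y \in [M-a,M+a]\subseteq[M/2,\,3M/2]\), so all logarithms are comparable to \(M\), and \(1/(1\mp a/M)\) can be expanded with a controlled error.

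For the first line, put \(y = Xe^{-a}\). One wrinkle is that \(y\) may be below \(2\), where the hypothesis on \(A_0\) is not available. I would first dispose of small \(y\) directly. If \(y<2\) then \(\pi(y)\le 1\) and \(Xe^{-a}<2\). The claimed main term \((X/M)e^{-a}\) is then \(<2/M\), while the allowed error \((X/M)A_1(W+1)/M\) is at least \(A_1X/M^{2}\), and \(X/M^{2}\) is bounded below since \(X=e^{M}\). So a large enough \(A_1\) covers this case trivially.

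For \(y\ge 2\), write
\[
\frac{y}{\log y}=\frac{Xe^{-a}}{M-a}=\frac{X}{M}e^{-a}\Bigl(1+\frac{a}{M-a}\Bigr).
\]
The correction \(\frac{X}{M}e^{-a}\frac{a}{M-a}\) is at most \(\frac{X}{M}\cdot\frac{2W}{M}\), using \(e^{-a}\le1\) and \(M-a\ge M/2\). The PNT error \(A_0y/\log^{2}y\) is at most \(A_0Xe^{-a}\cdot 4/M^{2}\le \frac{X}{M}\cdot\frac{4A_0}{M}\). Together these give the first line with \(A_1\ge 2+4A_0\), or larger to absorb the small-\(y\) case.

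For the second line, put \(y=Xe^{a}\ge X\ge 2\), which holds for \(n\ge4\), and in any case \(M\ge2\) forces this. Here
\[
\frac{y}{\log y}=\frac{X}{M}e^{a}\Bigl(1-\frac{a}{M+a}\Bigr),
\]
and the correction is at most \(\frac{X}{M}e^{W}\frac{W}{M}\). The PNT error is at most \(A_0Xe^{a}/M^{2}\le\frac{X}{M}\cdot\frac{A_0e^{W}}{M}\). So the same kind of \(A_1\) works, now with the factor \(e^{W}\) coming from \(e^{a}\le e^{W}\).

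The only real subtlety is making sure \(y\) is in the range where the PNT bound is available, together with the uniformity in \(a\). The small-\(y\) case in the first line is where I expect to need care. If the bound \(X/M^{2}\gtrsim 1\) is too loose for small \(M\), I would instead note that \(M\ge 2W\ge2\) gives \(X\ge e^{2}\), and enlarge \(A_1\) accordingly.
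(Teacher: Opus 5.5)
Your proposal is correct and is essentially the paper's proof: the same substitution \(\log y = M\mp a\), with the same bounds \(2W/M\) and \(4A_0/M\) in the first line and \(We^{W}/M\) and \(A_0e^{W}/M\) in the second, giving \(A_1 \ge 2+4A_0\). The one difference is that your separate case \(y<2\) can never occur, because \(a \le W \le M/2\) forces \(Xe^{-a} \ge e^{M/2} \ge e > 2\) (the paper records this as \(y \ge \sqrt X \ge M \ge 2\)); your argument for that case is valid but unnecessary.
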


\begin{proof}
  For \(y = Xe^{\mp a}\) we have \(\log y = M\mp a\), and \(0 \le a \le W \le M/2\)
  gives \(\tfrac{1}{2}M \le \log y \le \tfrac{3}{2}M\); in particular \(y \ge \sqrt X \ge M \ge 2\),
  so the hypothesis on \(A_0\) applies. We also use \(X \ge M^{2}\) freely below,
  which is automatic rather than an assumption: \(X = e^{M}\), and \(e^{M}/M^{2}\)
  attains its minimum \(e^{2}/4 > 1\) at \(M = 2\).

  \emph{First line.} By hypothesis on \(A_0\),
  \[
    \pi\bigl(Xe^{-a}\bigr)
      =\frac{Xe^{-a}}{M-a}+\theta\,A_0\frac{Xe^{-a}}{(M-a)^{2}},
      \qquad|\theta|\le 1 ,
  \]
  and since \(M-a \ge \tfrac{1}{2}M\) and \(e^{-a} \le 1\) the second term is at most
  \(4A_0X/M^{2}\). For the first, \(0 \le a/M \le \tfrac{1}{2}\) gives
  \[
    \frac{e^{-a}}{1-a/M}-e^{-a} = e^{-a} \cdot \frac{a/M}{1-a/M},
    \qquad\left|\;\cdot \;\right|\le \frac{2a}{M} \le \frac{2W}{M},
  \]
  so the first line holds with \(A_1 \ge 2+4A_0\).

  \emph{Second line.} The same computation with \(\log y = M+a\), now using the
  trivial \(M+a \ge M\) together with \(e^{a} \le e^{W}\), bounds the two error
  terms by \(A_0e^{W}/M\) and \(We^{W}/M\), hence by \((W+A_0)e^{W}/M\) in
  total, after division by \(X/M\).
\end{proof}

\begin{lemma}\label{lem:tail}
  There is an absolute constant \(c_0\) such that, for every \(W\) with
  \(1 \le W \le M/2\),
  \[
    \sum_{p \le Xe^{-W}}\pi(p)\,\pi(n/p)
      \ \le \ c_0\,e^{-W}\,\frac{X^{3}}{M^{3}}
      \ +\ c_0\,\frac{X^{3}}{M^{3}} \cdot \frac{\log n}{n^{1/4}} .
  \]
  The same bound holds for \(\sum_{p \le Xe^{-W}}g(p)\) and for
  \(\sum_{p \le Xe^{-W}}(\pi(p)-1)\pi(n/p)\), both summands being at most
  \(\pi(p)\pi(n/p)\).
\end{lemma}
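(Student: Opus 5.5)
Split the range \(p \le Xe^{-W}\) at a small threshold \(p \le X^{1/2} = n^{1/4}\), handling the very small primes crudely and the rest by the prime number theorem.

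\emph{Small primes.} For \(p \le n^{1/4}\) I would use only Chebyshev-type bounds: \(\pi(n/p) \ll (n/p)/\log(n/p) \ll n/(pM)\), since \(\log(n/p) \ge \tfrac{3}{2}M\). I would also use \(\pi(p) \le p\). Each term is then \(\ll n/M\), and there are at most \(n^{1/4}\) such primes, so this part is \(\ll n^{5/4}/M\). Since \(X^{3}/M^{3} = n^{3/2}/M^{3}\), this is
\[
  \frac{X^{3}}{M^{3}}\cdot\frac{M^{2}}{n^{1/4}} \ll \frac{X^{3}}{M^{3}}\cdot\frac{\log^{2}n}{n^{1/4}}.
\]
That is one log factor too many, so I would tighten it by using \(\pi(p)\ll p/\log p\). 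Summing \(\sum_{p\le Y}(p/\log p)(1/p) = \sum_{p \le Y}1/\log p \ll Y/\log^{2}Y\) with \(Y = n^{1/4}\) gives \(\ll (n/M)\,n^{1/4}/M^{2}\). This is \(\ll (X^{3}/M^{3})\,n^{-1/4}\), well within the second term. There is a catch: \(\pi(p)\ll p/\log p\) is fine for \(p \ge 2\), but it only gives a bound of the form \(\sum 1/\log p\), which is fine. If the constants misbehave, the stated \(\log n/n^{1/4}\) term leaves plenty of slack anyway.

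\emph{Middle range \(n^{1/4} < p \le Xe^{-W}\).} Here \(\log p \asymp M\) and \(\log(n/p) \asymp M\). So \(\pi(p)\ll p/M\) and \(\pi(n/p) \ll n/(pM)\), making each term \(\ll n/M^{2} = X^{2}/M^{2}\), which is essentially constant, as remarked before Lemma~\ref{lem:pnt-unif}. It therefore suffices to count primes, and \(\pi(Xe^{-W}) \ll Xe^{-W}/\log(Xe^{-W}) \ll Xe^{-W}/M\), using \(W \le M/2\) so that \(\log(Xe^{-W}) \ge M/2\). Multiplying gives \(\ll e^{-W}X^{3}/M^{3}\), the first term. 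I would use Chebyshev bounds with explicit constants rather than Lemma~\ref{lem:pnt-unif}, since only upper bounds are needed and no uniformity in \(W\) is required.

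\emph{Final clause.} The comparison \(0 \le g(p) \le \pi(p)\pi(n/p)\) follows from \(0 \le \pi(p)-1 \le \pi(p)\) and \(0 \le \pi(n/p)-\pi(p)+1 \le \pi(n/p)\); the latter is nonnegative because \(p \le \sqrt n \le n/p\). The same comparison gives the bound for \((\pi(p)-1)\pi(n/p)\).

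The main obstacle is only the bookkeeping of log powers in the small-prime range; the crude \(\pi(p)\le p\) loses one log, so the \(p/\log p\) bound must be used there.
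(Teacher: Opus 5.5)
Your proof is correct and is essentially the paper's argument: you split at $n^{1/4}$, bound each summand by Chebyshev, and in the range $n^{1/4}<p\le Xe^{-W}$ multiply the uniform bound $\ll n/\log^{2}n$ by $\pi(Xe^{-W})\ll Xe^{-W}/M$. One correction to your diagnosis: the extra logarithm in your first small-prime attempt came from counting the primes up to $n^{1/4}$ trivially as $n^{1/4}$, not from using $\pi(p)\le p$. The paper keeps the per-term bound $\ll n/\log n$ and uses $\pi(n^{1/4})\ll n^{1/4}/\log n$, whereas your partial-summation route gives the slightly stronger relative error $n^{-1/4}$.
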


\begin{proof}
  Let \(c_1\) be an absolute constant with \(\pi(y) \le c_1y/\log y\)
  (Chebyshev), and write \(L = \log n = 2M\). Split at \(n^{1/4}\).

  \emph{Range \(p \le n^{1/4}\).} Here \(\log(n/p) \ge \tfrac{3}{4}L\), so
  \[
    \pi(p)\,\pi(n/p)\ \le \ c_1^{2}\,\frac{n}{\log p \cdot \log(n/p)}
      \ \le \ \frac{4c_1^{2}}{3\log 2} \cdot \frac{n}{L} ,
  \]
  and there are at most \(\pi(n^{1/4}) \le 4c_1n^{1/4}/L\) such primes, giving a
  constant times \(n^{5/4}/L^{2}\); relative to \(X^{3}/M^{3} = 8n^{3/2}/L^{3}\)
  that is a constant times \(L/n^{1/4}\).

  \emph{Range \(n^{1/4} < p \le Xe^{-W}\).} Here \(\log p \ge \tfrac{1}{4}L\) and
  \(\log(n/p) \ge \tfrac{1}{2}L\), so \(\pi(p)\pi(n/p) \le 8c_1^{2}n/L^{2}\), while
  \[
    \pi\bigl(Xe^{-W}\bigr)\ \le \ c_1\frac{Xe^{-W}}{M-W}
      \ \le \ 2c_1\frac{Xe^{-W}}{M} = 4c_1\frac{Xe^{-W}}{L}
  \]
  by \(W \le M/2\). The product is at most
  \(32c_1^{3}e^{-W}n^{3/2}/L^{3} = 4c_1^{3}e^{-W}X^{3}/M^{3}\).
\end{proof}

\begin{proposition}\label{prop:T}
  There are absolute constants \(C_0\) and \(n_0\) such that for \(n \ge n_0\)
  \[
    \left|\;\frac{M^{3}}{X^{3}}\,T(n)-1\;\right|
      \ \le \ C_0\,\frac{\log M}{\sqrt M} .
  \]
\end{proposition}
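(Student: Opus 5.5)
The plan is to write \(T(n)=\sum_{p\le X}(\pi(p)-1)\,\pi(n/p)\), the sum over primes, using \(\pi(p_j)=j\). I will split this sum at \(p=Xe^{-W}\), with the parameter chosen as
\[
  W=\tfrac12\log M, \qquad\text{so that } e^{W}=\sqrt M .
\]
For \(n\ge n_0\) this choice satisfies \(1\le W\le M/2\). Hence both Lemma~\ref{lem:tail} and Lemma~\ref{lem:pnt-unif} apply, the latter with \(M\ge 2W\).

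\emph{Tail.} The range \(p\le Xe^{-W}\) is handled directly by Lemma~\ref{lem:tail}. Relative to \(X^{3}/M^{3}\) it contributes \(O(e^{-W})+O(\log n/n^{1/4})\). This equals \(O(1/\sqrt M)\), since \(\log n/n^{1/4}=2M/e^{M/2}\) is far smaller.

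\emph{Main range.} For \(Xe^{-W}<p\le X\) write \(p=Xe^{-a}\) with \(a\in[0,W)\), so that \(n/p=Xe^{a}\). Lemma~\ref{lem:pnt-unif} gives
\[
  \pi(p)\,\pi(n/p)=\frac{X^{2}}{M^{2}}\bigl(e^{-a}+E_1\bigr)\bigl(e^{a}+E_2\bigr),
\]
with \(|E_1|\le A_1(W+1)/M\) and \(|E_2|\le A_1(W+1)e^{W}/M\). Expanding the product, \(e^{a}E_1\), \(e^{-a}E_2\) and \(E_1E_2\) are each \(O((W+1)e^{W}/M)\); for the last one uses \((W+1)/M\le 1\). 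So
\[
  \pi(p)\,\pi(n/p)=\frac{X^{2}}{M^{2}}\Bigl(1+O\bigl((W+1)e^{W}/M\bigr)\Bigr)
\]
uniformly in the range. This is the observation made before Lemma~\ref{lem:pnt-unif}: the leading term does not depend on \(a\), so no Riemann sum is needed.

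The correction from the \(-1\) in \(\pi(p)-1\) is \(\pi(n/p)\le c\,Xe^{W}/M\) per prime. Relative to \(X^{2}/M^{2}\) this is \(O(Me^{W}/X)\), which is negligible.

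The number of primes in the main range is \(\pi(X)-\pi(Xe^{-W})\). By the first line of Lemma~\ref{lem:pnt-unif}, taken at \(a=0\) and at \(a=W\), it equals
\[
  \frac{X}{M}\Bigl(1-e^{-W}+O\bigl((W+1)/M\bigr)\Bigr).
\]
Multiplying the per-prime estimate by this count, the main range contributes
\[
  \frac{X^{3}}{M^{3}}\Bigl(1+O(e^{-W})+O\bigl((W+1)e^{W}/M\bigr)\Bigr).
\]

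\emph{Balancing.} Adding the tail, the total relative error is
\[
  O\bigl(e^{-W}+(W+1)e^{W}/M\bigr).
\]
With \(e^{W}=\sqrt M\) this is \(O(1/\sqrt M+\log M/\sqrt M)\), which is the claimed bound \(C_0\log M/\sqrt M\).

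\emph{Main obstacle.} The delicate point is the balancing. The error in the upper factor \(\pi(Xe^{a})\) in Lemma~\ref{lem:pnt-unif} grows like \(e^{W}\), which forces \(W\) to stay small. The tail bound, in contrast, needs \(W\) large. I expect the work to lie in checking that the cross terms in the product really are \(O((W+1)e^{W}/M)\) and not worse, in particular \(E_1e^{a}\), which is only \(\le A_1(W+1)e^{W}/M\). The compromise \(e^{W}\asymp\sqrt M\) is what produces the \(\log M/\sqrt M\) rate.
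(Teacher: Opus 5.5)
Your proposal is correct and follows the paper's proof essentially step for step. Both arguments split at \(Xe^{-W}\), control the tail by Lemma~\ref{lem:tail}, and use Lemma~\ref{lem:pnt-unif} to show the summand is constant to leading order, so that only a prime count is needed; both then take \(W=\tfrac12\log M\). The only difference is cosmetic: you handle the \(-1\) in \(\pi(p)-1\) as a separate \(O(Me^{W}/X)\) correction, whereas the paper folds it into the first factor as a \(-M/X\) term before expanding.
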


\begin{proof}
  Fix \(W \ge 1\) with \(M \ge 2W\), to be chosen. Split \(T\) at \(Xe^{-W}\); by
  Lemma~\ref{lem:tail} the part with \(p \le Xe^{-W}\) is at most
  \(c_0(e^{-W}+\log n/n^{1/4})X^{3}/M^{3}\).

  In the main range \(Xe^{-W} < p \le X\) put \(a = \log(X/p) \in [0,W)\). By
  \eqref{eq:pnt-sub},
  \begin{align*}
    \bigl(\pi(p)-1\bigr)\pi(n/p)
      =\frac{X^{2}}{M^{2}}
       &\Bigl(e^{-a}+\theta_1\tfrac{A_1(W+1)}{M}-\tfrac{M}{X}\Bigr)\\
       &\times
       \Bigl(e^{a}+\theta_2\tfrac{A_1(W+1)e^{W}}{M}\Bigr).
  \end{align*}
  Expanding, the leading term is \(e^{-a}e^{a} = 1\), and every other term
  carries a factor \(A_1(W+1)e^{W}/M\) or \(M/X\); using \(e^{-a} \le 1\),
  \(e^{a} \le e^{W}\) and \(X \ge M^{2}\), all of them together are at most
  \(A_2(W+1)e^{W}/M\) in absolute value, with \(A_2\) absolute. Hence
  \begin{align*}
    \sum_{Xe^{-W} < p \le X}\bigl(\pi(p)-1\bigr)\pi(n/p)
      ={}&\frac{X^{2}}{M^{2}}
          \Bigl(1+\theta_3\,\tfrac{A_2(W+1)e^{W}}{M}\Bigr)\\
        &\times \#\{\,p : Xe^{-W} < p \le X\,\},
  \end{align*}
  \(|\theta_3|\le 1\) --- this is where the summand being constant to leading
  order does the work: no partition of \([0,W]\) and no Riemann sum is needed,
  only the number of primes in the range. That number is, by
  \eqref{eq:pnt-sub} again,
  \[
    \pi(X)-\pi\bigl(Xe^{-W}\bigr)
      =\frac{X}{M}\Bigl(1-e^{-W}+\theta_4\,\tfrac{2A_1(W+1)}{M}\Bigr).
  \]
  Multiplying, and adding the tail bound,
  \[
    \frac{M^{3}}{X^{3}}T(n)
      =1+O\!\left(e^{-W}+\frac{(W+1)e^{W}}{M}+\frac{\log n}{n^{1/4}}\right),
  \]
  with an absolute implied constant. Now take \(W = \tfrac{1}{2}\log M\), legitimate
  for \(M \ge e^{2}\) since then \(W \ge 1\) and \(M \ge 2W\). It gives
  \(e^{-W} = M^{-1/2}\) and \((W+1)e^{W}/M = (\tfrac{1}{2}\log M+1)/\sqrt M\), both
  \(O(\log M/\sqrt M)\), while \(\log n/n^{1/4}\) is smaller than either.
\end{proof}

\begin{theorem}\label{thm:error}
  There are absolute constants \(C\) and \(n_0\) such that for all \(n \ge n_0\)
  \begin{equation}\label{eq:explicit}
    \left|\;\frac{S_2(n)}{\pi(\sqrt n)^{3}}-\frac{2}{3}\;\right|
      \ \le \ C\,\frac{\log\log n}{\sqrt{\log n}} .
  \end{equation}
  In particular, as \(n \to \infty\),
  \[
    S(n)\;\sim \;\tfrac{2}{3}\,\pi(\sqrt n)^{3}
            \;\sim \;\frac{16}{3} \cdot \frac{n^{3/2}}{\log^{3}n},
  \]
  and consequently
  \[
    C_n = \binom{\pi(n)+1}{2}
        +\Bigl(\tfrac{16}{3}+o(1)\Bigr)\frac{n^{3/2}}{\log^{3}n}.
  \]
  In particular the upper bound of Theorem~\ref{thm:asymp} overshoots by a
  factor \(\asymp\log^{2}n\).
\end{theorem}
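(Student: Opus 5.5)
The plan is to combine Lemma~\ref{lem:split} with Proposition~\ref{prop:T}, then pass from \(S_2\) to \(S\) using Lemma~\ref{lem:semiprime}.

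\emph{Step 1: the bound \eqref{eq:explicit}.} Start from the exact identity \eqref{eq:sqexact}:
\[
  \frac{S_2(n)}{P^{3}}-\frac23 = \Bigl(\frac{T(n)}{P^{3}}-1\Bigr)+\frac1{2P}-\frac1{6P^{2}}.
\]
The last two terms are \(O(M/X)\), which is negligible. For the first term, write
\[
  \frac{T(n)}{P^{3}} = \frac{M^{3}}{X^{3}}T(n)\cdot\Bigl(\frac{X/M}{P}\Bigr)^{3}.
\]
Proposition~\ref{prop:T} says the first factor is \(1+O(\log M/\sqrt M)\). For the second factor, the prime number theorem with the \(A_0\) error term (as in Lemma~\ref{lem:pnt-unif}) gives
\[
  P = \pi(X) = \frac{X}{M}\bigl(1+O(1/M)\bigr),
\]
so the cube is \(1+O(1/M)\). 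Multiplying, \(T(n)/P^{3} = 1+O(\log M/\sqrt M)\). Since \(M = \tfrac12\log n\), we have \(\log M/\sqrt M \asymp \log\log n/\sqrt{\log n}\). This proves \eqref{eq:explicit}.

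\emph{Step 2: from \(S_2\) to \(S\).} By Lemma~\ref{lem:semiprime}, \(S(n)-S_2(n) \ll n^{4/3}/\log^{2}n\). On the other hand \(P^{3} \asymp n^{3/2}/\log^{3}n\). The ratio of the two is \(\ll \log n/n^{1/6}\), which tends to \(0\). Hence \(S(n)\sim\tfrac23 P^{3}\).

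\emph{Step 3: the explicit asymptotics.} Since \(P\sim 2\sqrt n/\log n\), we get \(P^{3}\sim 8n^{3/2}/\log^{3}n\), and so
\[
  \tfrac23 P^{3}\sim \tfrac{16}{3}\,\frac{n^{3/2}}{\log^{3}n}.
\]
The formula for \(C_n\) is then just the definition \eqref{eq:Sdef} rearranged. For the overshoot claim, the upper bound of Theorem~\ref{thm:asymp} is \(n(\pi(\sqrt n)-1)\asymp n^{3/2}/\log n\). Dividing by \(S(n)\asymp n^{3/2}/\log^{3} n\) gives a ratio \(\asymp\log^{2}n\).

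I expect no real obstacle, since the analytic work sits in Proposition~\ref{prop:T}. The only point needing care is Step 1: the normalisation \(X/M\) used there must be converted to \(P\) without losing more than the stated rate. The PNT error \(O(1/M)\) is smaller than \(\log M/\sqrt M\), so this works.
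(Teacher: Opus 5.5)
Your proposal is correct and follows the paper's proof essentially step for step. It uses the exact identity \eqref{eq:sqexact}, combines Proposition~\ref{prop:T} with \(P=(X/M)(1+O(1/M))\) to convert the normalisation, and passes from \(S_2\) to \(S\) by Lemma~\ref{lem:semiprime}; your explicit ratio \(\log n/n^{1/6}\) and your check of the \(\log^{2}n\) overshoot fill in two steps the paper leaves implicit without changing the route.
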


\begin{proof}
  By \eqref{eq:sqexact} it suffices to bound \(T(n)/P^{3}-1\), since the
  remaining terms \(1/(2P)-1/(6P^{2})\) are \(O(1/P) = O(\log n/\sqrt n)\), far
  smaller than the claimed bound. By Lemma~\ref{lem:pnt-unif} with \(a = 0\) and
  \(W = 1\), \(P = \pi(X) = (X/M)(1+2\theta A_1/M)\), so
  \[
    \frac{T(n)}{P^{3}} = \frac{M^{3}}{X^{3}}T(n) \cdot \Bigl(\frac{X}{MP}\Bigr)^{3}
      =\Bigl(1+O\bigl(\tfrac{\log M}{\sqrt M}\bigr)\Bigr)
       \Bigl(1+O\bigl(\tfrac{1}{M}\bigr)\Bigr)
      =1+O\!\left(\frac{\log M}{\sqrt M}\right)
  \]
  by Proposition~\ref{prop:T}. Substituting \(M = \tfrac{1}{2}\log n\) gives
  \eqref{eq:explicit}.

  For the asymptotic statements, \(P = \pi(\sqrt n)\) and, by the prime number
  theorem, \(\pi(\sqrt n)^{3} \sim (2\sqrt n/\log n)^{3} = 8n^{3/2}/\log^{3}n\),
  so \(S_2(n) \sim \tfrac{2}{3}\pi(\sqrt n)^{3} \sim \tfrac{16}{3}n^{3/2}/\log^{3}n\).
  Lemma~\ref{lem:semiprime} transfers this to \(S\), and Corollary
  \ref{cor:total} to \(C_n\).
\end{proof}

\begin{remark}\label{rem:hypotheses}
  Five comments on the proof, the first two of which are traps.

  \begin{enumerate}
    \item \emph{No sieve theory is needed, and reaching for it gives the wrong
      answer.} The mass sits where \(p\) and \(n/p\) are both within a bounded
      factor of \(\sqrt n\), that is, at \(u = \log(n/p)/\log p \to 1\); and there
      \(\Phi\) degenerates to the exact identity \(\Phi(x,y) = \pi(x)-\pi(y)+1\),
      valid for \(y \ge \sqrt x\). The one-term Buchstab estimate
      \(\Phi(x,y) \approx x\,\omega(u)/\log y\), with \(\omega\) Buchstab's
      function, fed into \eqref{eq:Sdef}, returns \(8\) rather than \(16/3\);
      and it is uniform only for \(u \ge 2\) \cite{Fan}, which excludes precisely
      the region carrying the mass. Lemma~\ref{lem:split} says exactly what goes
      missing. On \(1 \le u \le 2\) one has \(\omega(u) = 1/u\), so the one-term
      estimate amounts to replacing the second factor of \(g\) by \(\pi(n/p)\)
      alone, that is, to dropping the \(-\pi(p)+1\). What is dropped is
      therefore \(\sum_j(j-1)^{2} = (P-1)P(2P-1)/6 \sim P^{3}/3\), which is
      \(\tfrac{8}{3}\,n^{3/2}/\log^{3}n\); restoring it gives
      \(8-\tfrac{8}{3} = \tfrac{16}3\). So the discrepancy is not a subtlety about
      \(\Phi\) at all, but the exact square sum that Lemma~\ref{lem:split}
      peels off.

    \item \emph{The convergence is slow and not monotone}, so a numerical check
      over too short a range is misleading. The raw quotient
      \(S(n)\log^{3}n/n^{3/2}\) peaks near \(10.9\) around \(n = 7 \cdot 10^{4}\)
      and is still \(9.55\) at \(n = 10^{7}\), against a limit of
      \(16/3 = 5.33\dots\). Most of that discrepancy, about two thirds of it on a
      logarithmic scale over the range tabulated, is the difference between
      \(\pi(\sqrt n)\) and \(2\sqrt n/\log n\), which is cubed here; the factor
      \(\bigl(\pi(\sqrt n)\log n/2\sqrt n\bigr)^{3}\) is \(1+6/\log n\) to first
      order, and is \(1.56\) at \(n = 10^{6}\). Measured instead against
      \(\tfrac{2}{3}\pi(\sqrt n)^{3}\), which absorbs that factor entirely, the
      picture is much better behaved, though still not monotone: see Figure
      \ref{fig:ratio}.

    \item \emph{The proof uses no Riemann sums, no partition of the range, and
      no interchange of limits}: \(W\) is an explicit function of \(n\), and
      \eqref{eq:explicit} is a single estimate. That is a consequence of Lemma
      \ref{lem:split}, which removes the \(\tfrac{1}{3}\) exactly, together with the
      observation that what remains has a summand constant to leading order.

    \item \emph{The rate is not sharp.} It is limited by the truncation at
      \(a = W\), not by the arithmetic input: at \(n = 10^{14}\) the left-hand side
      of \eqref{eq:explicit} is \(0.011\), against \(0.61\) for
      \(\log\log n/\sqrt{\log n}\). How much better the truth is we do not
      claim; multiplying the computed deviations by \(\log n\) gives
      \(0.55,0.92,0.59,0.68,0.54,0.47,0.37\) at \(n = 10^{6},\dots,10^{14}\),
      which neither settles nor rules out a rate of \(1/\log n\). Sharpening the
      theorem would mean treating the whole range \(0 \le a \le M\) rather than
      discarding \(a > W\).

    \item \emph{The hypotheses can be weakened} if one wants only the asymptotic
      and no rate. Lemma~\ref{lem:pnt-unif} was stated with the prime number
      theorem in the sharp form \(\pi(y) = y/\log y+O(y/\log^{2}y)\) because that
      is what produces an explicit error. The bare prime number theorem
      \(\pi(y) \sim y/\log y\) already suffices for
      \(S(n) \sim \tfrac{2}{3}\pi(\sqrt n)^{3}\): for \(a\) in a \emph{fixed} range
      \([0,W]\) the arguments \(Xe^{\pm a}\) tend to infinity with \(n\), so
      \eqref{eq:pnt-sub} holds with the error terms replaced by an unspecified
      \(\varepsilon_W(n) \to 0\), uniformly in \(a\); one then lets \(W \to \infty\)
      afterwards, the tail of Lemma~\ref{lem:tail} being \(O(e^{-W})\). This
      matters to anyone wishing to formalise the argument in a proof assistant,
      since it removes the need for a prime number theorem \emph{with} an error
      term.
  \end{enumerate}
\end{remark}

\begin{figure}[ht]
  \centering
  \includegraphics[width=\linewidth]{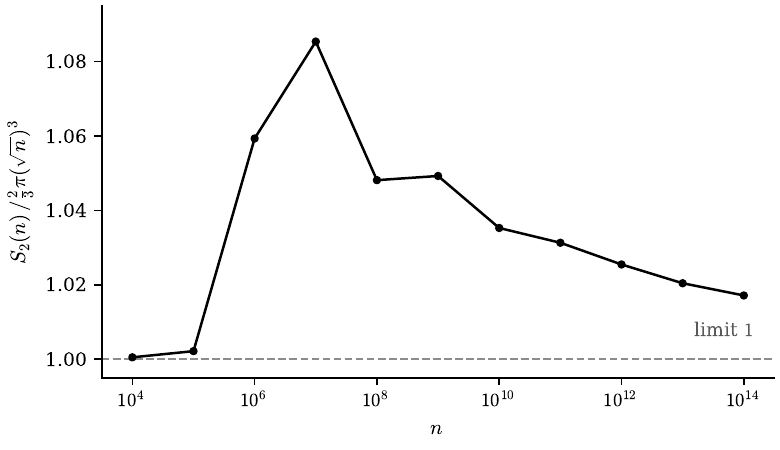}
  \caption{The quotient \(S_2(n)\big/\tfrac{2}{3}\pi(\sqrt n)^{3}\) against \(n\),
    on a logarithmic \(n\)-axis, for \(n = 10^{4},\dots,10^{14}\). Theorem
    \ref{thm:error} asserts that this tends to \(1\). It does, but only after
    rising to a maximum of \(1.085\) at \(n = 10^{7}\), and with a further slight
    rise between \(10^{8}\) and \(10^{9}\); at \(n = 10^{14}\) it is still
    \(1.017\). The two smallest values, at \(10^{4}\) and \(10^{5}\), sit
    within \(0.3\%\) of \(1\) by coincidence rather than for any reason, which
    is a useful warning on its own. Values for \(n \le 10^{9}\) are from a
    least-prime-factor sieve, those beyond from an \(O(n^{3/4})\)
    prime-counting recursion \cite{gitrepo}.}\label{fig:ratio}
\end{figure}

\begin{table}[ht]
  \centering
  \begin{tabular}{rrrr}
    \toprule
    \(n\) & \(C_n\) & \(\binom{\pi(n)+1}{2}\) & ratio \\
    \midrule
    \(10^{2}\) & \(362\) & \(325\) & \(1.1138\) \\
    \(10^{3}\) & \(14\,957\) & \(14\,196\) & \(1.0536\) \\
    \(10^{4}\) & \(769\,090\) & \(755\,835\) & \(1.0175\) \\
    \(10^{5}\) & \(46\,233\,662\) & \(46\,008\,028\) & \(1.0049\) \\
    \(10^{6}\) & \(3\,084\,943\,710\) & \(3\,081\,007\,251\) & \(1.0013\) \\
    \(10^{7}\) & \(220\,905\,096\,960\) & \(220\,832\,955\,910\) & \(1.00033\) \\
    \bottomrule
  \end{tabular}
  \caption{\(C_n\) against the lower bound of Theorem 3.4(3) of
    \cite{Snellman2000}.}\label{tab:err}
\end{table}

\section{The socle, and the type}\label{sec:socle}

This section is routine: every ingredient is standard, and for \(I_n\) the
evaluation is a one-line application. We include it because the answer is a
quantity that has already appeared twice above, and because the contrast with
the unitary case in Remark~\ref{rem:unitary} seems worth recording.

Recall the standard multigraded reduction: for a monomial ideal \(I \subsetneq S\),
the socle \((0:_{S/I}\mathfrak m)\) is spanned by the residues of those
monomials \(u \notin I\) with \(x_iu \in I\) for every \(i\) --- because \(S/I\)
is \(\ZZ^{r}\)-graded with one-dimensional components, so the annihilator is a
multigraded subspace and hence spanned by the monomials in it. See
\cite{HerzogHibi} or \cite{MillerSturmfels}.

\begin{proposition}\label{prop:socle}
  Let \(n \ge 2\). Then \(\operatorname{soc}(\Gam_n)\) is spanned by the
  monomials of weight \(m\) with \(n/2 < m \le n\); under the identification of
  the \(K\)-basis of \(\Gam_n\) with \(\{1,\dots,n\}\),
  \[
    \operatorname{soc}(\Gam_n) = \bigoplus_{n/2 < m \le n}K \cdot m,
    \qquad
    \dim_K\operatorname{soc}(\Gam_n) = \Bigl\lceil\frac{n}{2}\Bigr\rceil .
  \]
\end{proposition}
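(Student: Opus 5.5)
By the multigraded reduction recalled just above, the socle of \(\Gam_n = S/I_n\) is spanned by the residues of the monomials \(u \notin I_n\) such that \(x_iu \in I_n\) for every \(i = 1,\dots,r\). The plan is to translate this condition into a statement about weights and then solve it.

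\emph{Translation to weights.} A monomial \(u\) lies outside \(I_n\) exactly when \(w(u) = m \le n\). Likewise \(x_iu \in I_n\) exactly when \(mp_i > n\). So \(u\) is a socle monomial if and only if \(m \le n\) and \(mp_i > n\) for all \(1 \le i \le r\).

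\emph{Reduction to the smallest prime.} Since \(p_i \ge p_1 = 2\), the condition \(mp_i > n\) for all \(i\) is equivalent to the single condition \(2m > n\). The condition at \(i = 1\) is one of the family, so it is necessary. Conversely, \(2m > n\) gives \(mp_i \ge 2m > n\) for every \(i\), so it is sufficient. This is the same monotonicity in the prime that was used in the A182843 corollary, namely that only the smallest prime needs checking.

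\emph{The only non-vacuous point.} The variable \(x_1\) must actually be present in \(S\), which requires \(r \ge 1\), that is, \(n \ge 2\). This is exactly the hypothesis of the proposition. For \(n = 1\) there are no variables, and \(\Gam_1 = K\) is its own socle, which would break the formula.

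\emph{Conclusion.} The socle monomials therefore correspond bijectively to the integers \(m\) with \(n/2 < m \le n\), so \(\operatorname{soc}(\Gam_n) = \bigoplus_{n/2 < m \le n} K \cdot m\). The number of integers in \((n/2, n]\) is \(n - \lfloor n/2\rfloor = \lceil n/2\rceil\), which gives the dimension.

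There is no real obstacle in this argument. The only care needed is to justify that the socle is spanned by monomials, and that is supplied by the standard multigraded fact cited before the statement.
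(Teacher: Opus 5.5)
Your argument is correct and is essentially the paper's proof: you reduce to monomials, translate the socle condition to \(w(u)\le n < p_i\,w(u)\) for all \(i\), and observe that the constraint at \(p_1=2\) implies the others, leaving exactly the integers in \((n/2,n]\). One aside is wrong, though it does not affect the proof: the formula does not break at \(n=1\), because \(\Gam_1=K\) has socle \(K\cdot 1\), the integer \(1\) is the unique integer in \((1/2,1]\), and \(\lceil 1/2\rceil=1\), so the hypothesis \(n\ge 2\) only makes your necessity step (via \(x_1\)) available and is not a genuine restriction on the statement.
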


\begin{proof}
  By the reduction above, \(u\) is a socle monomial exactly when \(w(u) \le n\)
  and \(w(x_iu) = p_iw(u) > n\) for every \(i = 1,\dots,r\). Since \(p_1 = 2 \le p_i\)
  for all \(i\), the condition at \(i = 1\) implies all the others, so the system
  reduces to that single binding constraint: \(2w(u) > n\). So the socle monomials are those of weight
  in \((n/2,n]\), and there are \(n-\lfloor n/2\rfloor = \lceil n/2\rceil\)
  of them.
\end{proof}

\begin{corollary}\label{cor:type}
  For \(n \ge 2\) the Cohen--Macaulay type of \(\Gam_n\) is
  \[
    \operatorname{type}(\Gam_n) = \beta^{S}_{r}(\Gam_n)
      =C_{n,1} = \Phi(n,2) = \Bigl\lceil\frac{n}{2}\Bigr\rceil,
  \]
  and \(\Gam_n\) is Gorenstein if and only if \(n \le 2\).
\end{corollary}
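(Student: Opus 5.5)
The plan has three parts: Artinian-ness, the type as the top Betti number, and the Gorenstein criterion.

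\textbf{Artinian.} Since \(\Gam_n\) is a finite-dimensional \(K\)-algebra (dimension \(n\)), it is Artinian local with maximal ideal \(\mathfrak m\) generated by the \(x_i\). Hence it is Cohen--Macaulay of dimension \(0\). Its type is then \(\dim_K\operatorname{soc}(\Gam_n) = \dim_K(0:_{\Gam_n}\mathfrak m)\), because \(\operatorname{Hom}(K,\Gam_n)\) is the socle in depth zero. Proposition~\ref{prop:socle} gives this dimension as \(\lceil n/2\rceil\).

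\textbf{Type as the top Betti number.} To identify the type with \(\beta^S_r(\Gam_n)\) I would use the standard fact that, for an Artinian quotient \(S/I\) of \(S = K[x_1,\dots,x_r]\), the last module in the minimal free resolution satisfies \(\operatorname{Tor}^S_r(K,S/I) \cong \operatorname{soc}(S/I)(-r)\) up to shift. This can be computed via the Koszul complex on \(S/I\): its top homology is \(\{u \in S/I : x_iu = 0 \ \forall i\}\), which is exactly the socle. So \(\beta^S_r = \lceil n/2\rceil\).

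\textbf{Matching the other expressions.} As a cross-check, the Eliahou--Kervaire formula gives \(\beta_r = \sum_m \binom{\max(m)-1}{r-1}\). In the reversed order used here, I expect the nonzero terms to be exactly the generators with \(\min(m) = 1\), contributing \(C_{n,1}\) each with coefficient one. Getting this indexing right under the reversal is the one place demanding care, so I would prefer the Koszul/socle argument and mention Eliahou--Kervaire only as confirmation. The equalities \(C_{n,1} = \Phi(n,2) = \lceil n/2\rceil\) come from Theorem~\ref{thm:main}, together with the fact that \(\Phi(n,2)\) counts the odd integers \(\le n\).

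\textbf{Gorenstein criterion.} An Artinian local ring is Gorenstein iff its type is \(1\). Since \(\lceil n/2\rceil = 1\) iff \(n \le 2\), this gives the claim; for \(n = 2\), \(\Gam_2 \cong K[x_1]/(x_1^2)\), confirming the criterion directly. No step is a real obstacle; the only subtlety is the Betti-number indexing, handled by the Koszul argument.
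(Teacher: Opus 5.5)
Your proof is correct, and almost every ingredient matches the paper's proof: type as socle dimension, the last Betti number as socle dimension via top Koszul homology, Proposition~\ref{prop:socle}, Theorem~\ref{thm:main}, the count of odd numbers, and type \(1\) for Gorenstein. The one real difference is how you get the link \(\beta^{S}_{r}(\Gam_n) = C_{n,1}\).

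The paper gets it directly from Eliahou--Kervaire, \(\beta^{S}_{i} = \sum_{v}C_{n,v}\binom{r-v}{i-1}\), where only \(v = 1\) survives at \(i = r\). Your expectation about the reversal is right: renumbering the variables in reverse turns \(\max(m)\) into \(r+1-\min(m)\). The paper then uses Theorem~\ref{thm:main} only for \(C_{n,1} = \Phi(n,2)\).

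You instead compute \(\beta^{S}_{r}\) and \(C_{n,1}\) separately and close the chain by transitivity, since both equal \(\lceil n/2\rceil\). State that step explicitly; as written it is only implicit, but with it Eliahou--Kervaire is indeed optional.

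What each route buys:
\begin{itemize}
  \item Your route needs less machinery and avoids the index bookkeeping. The cost is that \(\beta^{S}_{r} = C_{n,1}\) looks like an arithmetic accident: it comes out of \((n/2,n]\) and the odd numbers in \([1,n]\) having the same size, plus Theorem~\ref{thm:main}. In fact it holds for every ideal that is stable with respect to the reversed order.
  \item The paper's route keeps the two computations independent. Together they give \(C_{n,1} = \lceil n/2\rceil\) by pure algebra, without Theorem~\ref{thm:main}. It also places the only genuine coincidence where the remark after the corollary puts it: the socle count against \(\Phi(n,2)\).
\end{itemize}
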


\begin{proof}
  For an Artinian module the last Betti number over \(S\) is the socle
  dimension, so the first two equalities are Proposition~\ref{prop:socle}. For
  the third, Eliahou--Kervaire gives
  \(\beta^{S}_{i} = \sum_{v}C_{n,v}\binom{r-v}{i-1}\), and at \(i = r\) the
  binomial vanishes unless \(v = 1\), leaving \(\beta^{S}_{r} = C_{n,1}\); the last
  equality is Theorem~\ref{thm:main}. Gorenstein means type \(1\), i.e.
  \(\lceil n/2\rceil = 1\), i.e.\ \(n \le 2\).
\end{proof}

\begin{remark}
  Corollary~\ref{cor:type} is not an independent third sighting of
  \(\lceil n/2\rceil\), and it would be misleading to present it as one.
  For an Artinian Golod quotient the denominator of \eqref{eq:PB} is the list
  of Betti numbers,
  \[
    D_n(t) = 1-\sum_{i \ge 1}\beta^{S}_{i}\,t^{\,i+1};
  \]
  since \(\operatorname{pd}_S\Gam_n = r\), its leading coefficient is
  \(-\beta^{S}_{r} = -\operatorname{type}(\Gam_n)\), and dividing by the monic
  \((1+t)^{\,r-\ell_1}\) preserves it. So clause (6) of Conjecture 4.6
  \emph{is} the assertion that the type is \(\lceil n/2\rceil\), and the proof
  of clause (6) given above --- ``the top-degree term arising only from
  \(j = 1\)'' --- is the Eliahou--Kervaire computation \(\beta^{S}_{r} = C_{n,1}\)
  in generating-function clothing. What does remain is a genuine coincidence of
  two countings: the socle counts the integers in \((n/2,n]\), while
  \(\Phi(n,2)\) counts the odd integers in \([1,n]\). Different sets, of the
  same size.
\end{remark}

\begin{remark}\label{rem:unitary}
  The reason Proposition~\ref{prop:socle} is a one-liner is that the socle
  condition is ``\(p_iw(u) > n\) for \emph{every} prime \(p_i \le n\)'', which
  collapses because the least prime is \(2\). The analogue for the
  \emph{unitary} truncations does not collapse: there the condition reads
  ``\(kp > n\) for every prime \(p\) coprime to \(k\)'', and
  \cite[\S7]{SnellmanUnitaryN} obtains the socle dimension only
  asymptotically, as \(\delta n\) with
  \[
    \delta = \frac{1}{2}+\sum_{i \ge 1}
      \frac{p_i^{-1}-p_{i+1}^{-1}}{p_1p_2\cdots p_i} \approx 0.6077 .
  \]
  For Dirichlet convolution nothing of the sort is needed: the socle is
  exactly the integers in \((n/2,n]\), for every \(n\), so its density is
  \(1/2\) in the limit and \(1/2+O(1/n)\) at every finite stage. The asymmetry between the two convolutions is the interesting
  content here, rather than the statement itself.
\end{remark}

\begin{remark}
  Grading by \(\lambda = \Om\), the algebra \(\Gam_n\) is \emph{level} --- socle
  concentrated in one degree --- exactly for \(n \le 3\). Indeed for \(n \ge 4\)
  let \(2^{k}\) be the largest power of \(2\) with \(2^{k} \le n\); then
  \(2^{k} > n/2\), so \(2^{k}\) is a socle element of degree \(k \ge 2\), while
  Bertrand's postulate applied to \(\lfloor n/2\rfloor\) supplies a prime
  \(q\) with \(n/2 < q \le n\), a socle element of degree \(1\).
\end{remark}

\section{Related work}\label{sec:related}

Two remarks on the surrounding literature, both of which bear on how the result
above should be read.

First, on the algebraic side, the ring \(\Gam\) itself continues to be
studied --- recently in \cite{GoswamiKleynPorrill}, which shows it is neither
Noetherian nor Artinian, constructs several families of prime ideals, and
computes its Krull dimension to be infinite. Its finite-dimensional truncations
appear not to have been taken up by anyone else, but they were taken up once
more by the present author, from a different direction: \(\{1,2,\dots,n\}\) is
a \emph{shifted multicomplex}, the complement of a strongly stable ideal being
to a multicomplex what a shifted simplicial complex is to a squarefree one, and
\cite{SnellmanLaplacians} accordingly computes the spectrum of its Laplacian.
By the theorem of Duval and Reiner \cite{DuvalReiner} that spectrum is integral,
and \cite[Cor.~14]{SnellmanLaplacians} identifies its multiplicities as the
arithmetic counts
\[
  t_{i,k}(n) = \#\{\,1 < m \le n : \Om(m) = k,\ p_i \mid \operatorname{sfp}(m)\,\},
\]
\(\operatorname{sfp}\) denoting the squarefree part. These are not the counts
\(C_{n,v}\) of the present note --- they fix the total degree and impose a
divisibility condition, rather than fixing the least support --- so neither
result subsumes the other, and it is a natural question, and the exact analogue
of Theorem~\ref{thm:main}, whether \(t_{i,k}\) too admits a sieve-theoretic
closed form. Artinian truncations of Stanley--Reisner rings are an active topic
\cite{Miyashita}, and \(\Gam_n\) is an object of that general shape, but no
arithmetic enters there.

Second, and more to the point, the bridge Theorem~\ref{thm:main} builds appears
to be new. The asymptotic behaviour of \(\Phi(x,y)\) is classical: Buchstab
\cite{Buchstab} obtained \(\Phi(x,x^{1/u}) \sim u\,\omega(u)\,x/\log x\) in
terms of the function that now bears his name, and de Bruijn \cite{deBruijn}
gave an approximation uniform for all \(x \ge y \ge 2\); see
\cite{Tenenbaum} for a textbook account, \cite{Fan} for numerically explicit
estimates, \cite{Gorodetsky} for short intervals, and \cite{Holt} for the
regime of fixed \(p\), which is the one closest to ours. We have not been able
to find anywhere in the literature a connection between these counts and any
invariant of a free resolution; the two subjects appear not to cite each other
at all. (A search for ``Buchstab'' in commutative algebra is apt to return
instead the bigraded Betti numbers of face rings studied by V.~M.~Buchstaber,
which is an unrelated subject and an unrelated person.)

\section{What is next}\label{sec:next}

Theorem~\ref{thm:main} moves the combinatorics of \(G(I_n)\) wholesale into
sieve theory, and the questions it raises are correspondingly analytic.

\begin{openproblem}
  Give asymptotics for the whole profile \(v\mapsto C_{n,v} = \Phi(n,p_v)\),
  \emph{uniformly} in \(v\), and deduce asymptotics for the graded Betti
  numbers of \(\Gam_n\) through Corollary 4.2 of \cite{Snellman2000}. The
  behaviour of \(\Phi(x,y)\) is classical \cite{Buchstab,deBruijn,Tenenbaum},
  but here \(y = p_v\) sweeps the whole interval \([2,n]\) as \(v\) does, so what
  is wanted is a statement uniform across all of it, including the two
  degenerate ends \(v = 1\), where \(\Phi(n,2) = \lceil n/2\rceil\), and
  \(v = \pi(n)\), where \(\Phi(n,p_{r}) = 1\). The uniform approximation of
  \cite{deBruijn} and the explicit estimates of \cite{Fan} are the natural
  starting points.
\end{openproblem}

\begin{openproblem}
  Theorem~\ref{thm:main} counts minimal generators by least support only. Is
  there a comparable sieve-theoretic description of the refined counts
  \(C_{n,v,d}\), which additionally fix the total degree \(d = \Om\)? A natural
  guess is a \(\Phi\)-analogue restricted to integers with exactly \(d\) prime
  factors; the tables in Figure 2 of \cite{Snellman2000} are the data to test
  it against.
\end{openproblem}

\begin{openproblem}
  Dirichlet convolution is the coarsest of Narkiewicz's regular convolutions.
  Carry out the analysis of \cite{Snellman2000} and of this note for the
  unitary, ternary, and general greedy convolutions: which truncation ideals are
  stable, and does an analogue of Theorem~\ref{thm:main} hold? For the unitary
  case the truncations were treated by Stanley--Reisner methods in
  \cite{SnellmanUnitaryGeneral,SnellmanUnitaryN}, and it is not clear that a
  sifting function
  appears there at all. This is work in progress by the author, who intends to
  treat the unitary case next.
\end{openproblem}

\begin{openproblem}\label{op:lean}
  The elementary half of this note involves no homological algebra and is
  within reach of a proof assistant. Part of it has been formalised in Lean~4
  and Mathlib, and is available in \cite{gitrepo}: Corollary~\ref{cor:split},
  and the whole polynomial argument of Section~\ref{sec:conjecture} ---
  Lemma~\ref{lem:secondiff}, Proposition~\ref{prop:order} and clauses
  (1), (2), (4), (5) of Theorem~\ref{thm:conj} --- are proved outright, with no
  \texttt{sorry} and on the standard axioms. What remains is (i) clauses (3)
  and (6), which are degree and leading-coefficient computations, fiddly rather
  than deep; (ii) Propositions~\ref{prop:fixed344}--\ref{prop:fixed345}; and
  (iii) Theorems~\ref{thm:S32} and \ref{thm:main} themselves. Item (iii) is the
  real obstruction, and it is the same one that puts Section
  \ref{sec:conjecture}'s homological input out of reach: the formalisation
  takes \(C_{n,v} = \Phi(n,p_v)\) as a definition and checks it numerically,
  because \(G(I_n)\) cannot even be stated without monomial ideals, which
  Mathlib does not have --- any more than it has minimal free resolutions,
  graded Betti numbers or Golod rings.
\end{openproblem}

Finally, three integer sequences arising here deserve to be in the OEIS \cite{OEIS}.
\(C_n\) is \seqnum{A182843}, whose entry records none of the above; the
sequence \(C_{n,2}\) and the triangle \(C_{n,v}\) appear not to be present at
all.

\section*{Notation}

Collected for reference; equation numbers point at the defining display.

\begingroup\small
\begin{center}
\begin{tabular}{@{}ll@{}}
  \toprule
  \multicolumn{2}{@{}l}{\emph{Arithmetical}}\\
  \(p_1 = 2 < p_2 < \cdots\)      & the primes \\
  \(\pi(x)\)                & the number of primes \(\le x\) \\
  \(\Om(m)\), also \(\lambda\) & number of prime factors of \(m\), with multiplicity \\
  \(\lpf(m)\)               & least prime factor of \(m > 1\) \\
  \(\operatorname{sfp}(m)\) & squarefree part of \(m\) \\
  \(\Phi(x,y)\)             & Legendre's sifting function, \eqref{eq:phidef} \\
  \(R_p(y)\)                & number of \(p\)-rough integers \(\le y\) \\
  \(\omega(u)\)             & Buchstab's function, \emph{not} the count of distinct prime factors \\
  \addlinespace
  \multicolumn{2}{@{}l}{\emph{Of this note and of \cite{Snellman2000}}}\\
  \(\Gam\), \(\Gam_n\)      & the ring of number-theoretic functions, and its truncation \\
  \(S\), \(I_n\)            & \(K[x_1,\dots,x_r]\) with \(r = \pi(n)\), and the truncation ideal \\
  \(w(m)\)                  & weight of a monomial, \(\prod_ip_i^{a_i}\) \\
  \(G(I_n)\)                & the minimal monomial generating set \\
  \(\min(m)\), \(\max(m)\)  & least and greatest index occurring in \(m\) \\
  \(\operatorname{supp}(m)\) & the set of indices occurring in \(m\) \\
  \(C_{n,v}\), \(C_n\)      & generator counts by least support, and their total, \eqref{eq:Cdef} \\
  \(E(n,v)\)                & composites \(\le n\) with every prime factor \(>p_v\), \eqref{eq:Edef} \\
  \(D_n\), \(q_n\)          & denominators of the Poincaré--Betti series, \eqref{eq:PB} \\
  \(\ell_1\), \(\ell_2\)    & the exponents of Conjecture 4.6 \\
  \(S(n)\), \(S_2(n)\)      & the error term \eqref{eq:Sdef}, and its \(\Om = 2\) part \\
  \(T(n)\), \(P\)           & Lemma~\ref{lem:split}; \(P = \pi(\sqrt n)\) \\
  \(X\), \(M\)              & \(\sqrt n\) and \(\log\sqrt n\), \eqref{eq:XM} \\
  \(\operatorname{soc}\)    & socle, Section~\ref{sec:socle} \\
  \bottomrule
\end{tabular}
\end{center}
\endgroup

Four collisions are worth flagging. Against standard usage: \(\omega\) here is
Buchstab's function, not the number of \emph{distinct} prime factors, which this
note never uses. Against the accompanying computational report \cite{gitrepo}:
\(T(n)\) is the sum of Lemma~\ref{lem:split}, while that report uses the same
letter for an unrelated quantity in its \(\Om \ge 3\) table. And two internal ones,
both harmless but worth naming: \(S\) is the polynomial ring while \(S(n)\) is
the error term \eqref{eq:Sdef}, and \(q\) is a generic prime in Sections
\ref{sec:main}--\ref{sec:errata} while \(q_n\) is a polynomial in Section
\ref{sec:conjecture}.

\section*{Acknowledgements}

The whole subject of this note, and of \cite{Snellman2000} before it, is owed
to \textbf{Johan Andersson} \orcidlink{0000-0002-9651-1766}. It was he who
suggested studying the truncations \(\Gam_n\) in the first place, and he who
pointed out that the ideals \(I_n\) are monomial --- and that this ought,
therefore, to be my cup of tea. It was, and the observation has now paid out
twice.

His hope at the time was specific, and it is worth recording because a quarter
of a century has now furnished a partial answer to it. Monomial ideals carry a
great deal of theory; the truncations \(\Gam_n\) are monomial and arithmetic at
once; so, he reasoned, the algebraic machinery ought to run backwards and
deliver number-theoretic conclusions for nothing. That is not what has happened.
The traffic has gone almost entirely the other way. It is the algebra that has
supplied the questions --- Conjecture 4.6, which is in the end the assertion
that \(v\mapsto C_{n,v}\) is exactly linear beyond \(v = \ell_1(n)\); the error
term in Theorem~\ref{thm:asymp}, which is a Mertens-type sum --- and answering
them as number theory is what has illuminated the algebra. Theorem
\ref{thm:error} is the clearest case: a statement about the minimal free
resolution of \(\Gam_n\), settled by an argument containing no algebra at all.

There is, I think, a structural reason for the one-way traffic, and stating it
is more useful than lamenting it. Most of what commutative algebra knows about
a monomial ideal is \emph{extremal}: Macaulay's theorem, Kruskal--Katona, the
Bigatti--Hulett--Pardue bound. Each of these constrains an arbitrary object with
a given Hilbert function, and is therefore only as strong as the worst such
object. But \(\Gam_n\) is nowhere near extremal, and the resulting bounds are
correspondingly slack. Concretely: \(\{1,2,\dots,n\}\) is a multicomplex, so by
Macaulay's theorem the sequence
\[
  d\ \longmapsto\ \#\{\,m \le n : \Om(m) = d\,\},
\]
the Landau counts, must be an \(M\)-sequence, which yields growth bounds free of
any analytic input. They are true and they are useless. At \(n = 200\) there are
\(46\) primes, and Macaulay permits as many as \(1081\) integers with
\(\Om = 2\); the true number is \(62\). No inequality of that kind is going to
tell an analytic number theorist something he does not already know.

Where the transfer does work is where the algebra is \emph{exact} rather than
extremal. Eliahou--Kervaire gives the graded Betti numbers on the nose, and it
is precisely that exactness which makes this note possible at all: the identity
\(C_{n,v} = \Phi(n,p_v)\) is a statement about an invariant that the algebra
computes rather than bounds. Section~\ref{sec:socle} is a small instance of the
same phenomenon, and \cite{SnellmanLaplacians} --- where Duval--Reiner
integrality forces the Laplacian spectrum of \(\Gam_n\) to be a list of
arithmetic counting functions --- is another. So Johan's hope was not wrong so
much as mis-aimed. The fruit is there; it is just not low-hanging, and it does
not grow on the general-inequality branch of the tree.

\begin{center}\small\(\ast\qquad\ast\qquad\ast\)\end{center}

Just as Geheimrat Goethe had the diligent Friedrich Wilhelm Riemer to help him
write \emph{Die Wahlverwandtschaften}, just as Grothendieck had Jean Dieudonné
to help him complete EGA, I have \textbf{Claude}! What exactly that amounted to
is set out in the disclosure section below.

\begin{center}\small\(\ast\qquad\ast\qquad\ast\)\end{center}

It is a pleasure to acknowledge the computer algebra systems without which
none of this would exist. The computations behind \cite{Snellman2000} were
carried out in \textsf{Maple}: the generator counts of its Figures 1 and 2,
and --- assembled from those counts through the formulas of its Corollaries 4.2
and 4.4 --- the Poincaré--Betti series of its Figure 3. In the companion papers
on unitary convolution the division of labour was quite different. For
\cite{SnellmanUnitaryGeneral,SnellmanUnitaryN}, \textsf{Maple} was used to \emph{generate}
\textsf{Macaulay2} \cite{Macaulay2} input --- procedures emitting ring and
ideal declarations line by line --- and \textsf{Macaulay2} then computed the
free resolutions, Betti numbers and Poincaré--Betti series. The homology of the
relevant simplicial complex was computed with the \textsf{Simplicial Homology}
package \cite{SimplicialHomology} of Dumas, Heckenbach, Saunders and Welker for
\textsf{GAP} \cite{GAP}; having found it torsion-free, the author wrote a short
\textsf{GAP} programme to test lex-shellability, which it duly was. Every
computation in the present note was done in \textsf{SageMath} \cite{Sage}, and
deliberately along a route different from the one taken
in 1999 --- the minimal generators of \(I_n\) are enumerated as integers rather
than counted by the formulas under test --- so that agreement between the two is
evidence rather than tautology.

\section*{Disclosure of AI assistance}

This manuscript was produced with substantial assistance from a large language
model, Claude (Anthropic), and the extent of it is set out here rather than
left to be inferred.

The errata of Section~\ref{sec:errata} were found by Claude while re-reading
\cite{Snellman2000} with the author, by recomputing every table in that paper
along a route independent of the one used there. Found likewise by Claude,
working with the author in interactive sessions: Theorem~\ref{thm:main} and its
proof; the reduction of Conjecture 4.6 to the order of vanishing of \(D_n\) at
\(t = -1\) (Lemma~\ref{lem:secondiff} and Proposition~\ref{prop:order}); the
asymptotic Theorem~\ref{thm:asymp}; the identification of \(C_n\) with
\seqnum{A182843}; Theorem~\ref{thm:error} together with Lemmas
\ref{lem:semiprime}, \ref{lem:pnt-unif} and \ref{lem:tail}, and the two
cautions now folded into Remark~\ref{rem:hypotheses}; and
the contents of Section~\ref{sec:socle}. Claude also located and verified the
references, drew Figures~\ref{fig:profile} and \ref{fig:ratio}, wrote the accompanying
\textsf{SageMath} code and the Lean formalisation mentioned in Open Problem
\ref{op:lean}, and drafted this manuscript, with the author directing the
exposition throughout.

Every numerical claim in the note was checked along a route independent of the
formula being tested, and the reference list has been checked against primary
sources. The author has verified the results to the best of his ability, made
the final edits, and assumes full responsibility for any errors, mistakes or
gaps that remain.

\section*{Funding}

This research received no external funding. It was carried out as part of the
author's regular duties at Linköping University, within the fraction of that
employment allocated to research.

\section*{Conflicts of interest}

The author declares no financial or non-financial conflicts of interest. The
nature and extent of large-language-model assistance in producing this
manuscript is disclosed in full in the section above.


\begin{thebibliography}{Sne02a}

\bibitem[AH96]{AramovaHerzog}
Annetta Aramova and Jürgen Herzog,
\emph{Koszul cycles and Eliahou--Kervaire type resolutions},
J. Algebra \textbf{181} (1996), no.~2, 347--370.

\bibitem[Buc37]{Buchstab}
A.~A. Buchstab,
\emph{Asymptotic estimates of a general number-theoretic function},
Mat. Sb. \textbf{44} (1937), 1239--1246. (Russian.)

\bibitem[CE59]{CashwellEverett}
E.~D. Cashwell and C.~J. Everett,
\emph{The ring of number-theoretic functions},
Pacific J. Math. \textbf{9} (1959), 975--985.

\bibitem[dB50]{deBruijn}
N.~G. de Bruijn,
\emph{On the number of uncancelled elements in the sieve of Eratosthenes},
Indag. Math. \textbf{12} (1950), 247--256.

\bibitem[DHSW]{SimplicialHomology}
Jean-Guillaume Dumas, Frank Heckenbach, B.~David Saunders and Volkmar Welker,
\emph{Simplicial Homology}, a \textsf{GAP} package.

\bibitem[DR02]{DuvalReiner}
Art M. Duval and Victor Reiner,
\emph{Shifted simplicial complexes are Laplacian integral},
Trans. Amer. Math. Soc. \textbf{354} (2002), no.~11, 4313--4344.

\bibitem[EK90]{EliahouKervaire}
Shalom Eliahou and Michel Kervaire,
\emph{Minimal resolutions of some monomial ideals},
J. Algebra \textbf{129} (1990), no.~1, 1--25.

\bibitem[Fan23]{Fan}
Steve Fan,
\emph{Numerically explicit estimates for the distribution of rough numbers},
arXiv:2306.03347.

\bibitem[GAP]{GAP}
The GAP Group,
\emph{GAP --- Groups, Algorithms, and Programming},
\url{https://www.gap-system.org}.

\bibitem[GKP24]{GoswamiKleynPorrill}
Amit Goswami, Steven Kleyn and Lauren Porrill,
\emph{On structures of the ring of arithmetical functions: prime ideals and
beyond}, arXiv:2410.10824.

\bibitem[GL69]{GulliksenLevin}
Tor H. Gulliksen and Gerson Levin,
\emph{Homology of local rings},
Queen's Papers in Pure and Applied Mathematics, vol.~20, Queen's University,
Kingston, Ont., 1969.

\bibitem[Gol62]{Golod}
E.~S. Golod,
\emph{On the homology of some local rings},
Soviet Math. Dokl. \textbf{3} (1962), 745--749.

\bibitem[Gor24]{Gorodetsky}
Ofir Gorodetsky,
\emph{The variance of integers without small prime factors in short intervals},
Math. Z. \textbf{308} (2024), no.~4, Paper No.~59; arXiv:2111.00853.

\bibitem[GS]{Macaulay2}
Daniel R. Grayson and Michael E. Stillman,
\emph{Macaulay2, a software system for research in algebraic geometry},
\url{https://macaulay2.com}.

\bibitem[HH11]{HerzogHibi}
Jürgen Herzog and Takayuki Hibi,
\emph{Monomial Ideals},
Grad. Texts in Math., vol.~260, Springer, 2011.

\bibitem[Hol23]{Holt}
Fred B. Holt,
\emph{On the counts of \(p\)-rough numbers},
arXiv:2308.07570.

\bibitem[HRW99]{HerzogReinerWelker}
Jürgen Herzog, Victor Reiner and Volkmar Welker,
\emph{Componentwise linear ideals and Golod rings},
Michigan Math. J. \textbf{46} (1999), no.~2, 211--223.

\bibitem[Miy26]{Miyashita}
Sora Miyashita,
\emph{Canonical traces of Artinian truncations of Stanley--Reisner rings},
arXiv:2608.15955.

\bibitem[MP15]{McCulloughPeeva}
Jason McCullough and Irena Peeva,
\emph{Infinite graded free resolutions},
in: Commutative Algebra and Noncommutative Algebraic Geometry, Vol.~I,
Math. Sci. Res. Inst. Publ. \textbf{67}, Cambridge Univ. Press, 2015,
pp.~215--257.

\bibitem[MS05]{MillerSturmfels}
Ezra Miller and Bernd Sturmfels,
\emph{Combinatorial Commutative Algebra},
Grad. Texts in Math., vol.~227, Springer, 2005.

\bibitem[OEIS]{OEIS}
The On-Line Encyclopedia of Integer Sequences, published electronically at
\url{https://oeis.org}, sequence \seqnum{A182843}.

\bibitem[Pee96]{Peeva}
Irena Peeva,
\emph{0-Borel fixed ideals},
J. Algebra \textbf{184} (1996), no.~3, 945--984.

\bibitem[Rep]{gitrepo}
Supporting code, raw computational output and the Lean formalization,
\texttt{dirichlet-revisited/} in the repository
\url{https://gitlab.liu.se/jansn19/arithmetical-functions-truncations}.

\bibitem[Sage]{Sage}
The Sage Developers,
\emph{SageMath, the Sage Mathematics Software System},
\url{https://www.sagemath.org}.

\bibitem[Sne00]{Snellman2000}
Jan Snellman,
\emph{Truncations of the ring of number-theoretic functions},
Homology Homotopy Appl. \textbf{2} (2000), 17--27; arXiv:math/9904143.

% NOTE (2026-08-30): the two entries below cite the arXiv preprints ONLY, deliberately.
% They are two distinct deposits -- different identifiers, different submission dates
% (2002-05-23 and 2002-08-23), different titles and abstracts, verified against the deposited
% PDFs.  An earlier version of this bibliography gave BOTH the journal reference "Int. J. Math.
% Game Theory Algebra 13 (2003), no. 6, 485-519", which cannot be right for two separate papers,
% and attributed that page range to arXiv's metadata.  arXiv sets no journal-ref on either
% record; the claim had no source.  zbMATH does have exactly one published item, Zbl 1165.11301,
% at those pages, under a title matching neither preprint and linking only math/0205242, and its
% review treats the general-V case first and V = [n] "in the second part" -- so the two preprints
% were evidently published together as one article.  But Jan checked his office on 2026-08-29 and
% has no reprint of the 2003 paper (he has the IJMGTA 2000 and 2002 ones, both on unrelated
% topics), so which preprint became which part cannot be verified against a copy.  Since the
% preprints are freely accessible and Section 7 of math/0208183 is cited by section number --
% a numbering the merged article would not preserve -- citing the preprints is both the honest
% and the more precise choice.  Labels use the arXiv deposit year, as they do for every other
% preprint in this list (Sne06, Fan23, Hol23, GKP24, Miy26).
\bibitem[Sne02a]{SnellmanUnitaryGeneral}
Jan Snellman,
\emph{The ring of arithmetical functions with unitary convolution: general
truncations},
arXiv:math/0205242.

\bibitem[Sne02b]{SnellmanUnitaryN}
Jan Snellman,
\emph{The ring of arithmetical functions with unitary convolution: the
\([n]\)-truncation},
arXiv:math/0208183.

\bibitem[Sne06]{SnellmanLaplacians}
Jan Snellman,
\emph{Laplacians on shifted multicomplexes},
arXiv:math/0606104.

\bibitem[Ten15]{Tenenbaum}
Gérald Tenenbaum,
\emph{Introduction to Analytic and Probabilistic Number Theory},
3rd ed., Grad. Stud. Math. \textbf{163}, Amer. Math. Soc., 2015.
% NOTE (2026-08-28): Ch. III.6 is "Integers free of small prime factors".  While
% preparing references/CERTIFICATE.md, Fan (arXiv:2306.03347) was found to give the
% locator directly -- its eq. (1.3) is followed by "uniformly for 2 <= y <= sqrt x
% (see [13, Theorem III.6.4])", [13] being this book, and it cites [13, Cor. III.6.5]
% for omega(u) = e^{-gamma} + O(u^{-u/2}).  So the number is III.6.4, not the III.6.7
% an earlier note guessed.  That is SECONDARY corroboration from a paper citing the
% book, not verification against the book, so no theorem number is given here.  Check
% a physical or LiU-subscription copy before adding one.

\end{thebibliography}
\end{document}